\newcommand{\ilim}{\mathop{\varprojlim}\limits}
\newtheorem{lemma}{Lemma}[section]
\newtheorem{theorem}[lemma]{Theorem}
\newtheorem{proposition}[lemma]{Proposition}
\newtheorem{cor}[lemma]{Corollary}
\newtheorem{claim*}{Claim}
\newtheorem{defn}[lemma]{Definition}
\theoremstyle{definition}
\newtheorem{remark}[lemma]{Remark}
\newcommand{\PP}{{\mathbb P}}
\newcommand{\C}{{\mathbb C}}
\newcommand{\F}{{\mathbb F}}
\newcommand{\Q}{{\mathbb Q}}
\DeclareMathOperator{\Aut}{Aut}
\DeclareMathOperator{\Gal}{Gal}
\DeclareMathOperator{\id}{id}
\DeclareMathOperator{\sgn}{sgn}
\newcommand{\into}{\hookrightarrow}
\numberwithin{equation}{section}
\numberwithin{table}{section}
\keywords{Belyi map,
arboreal Galois group, dynamical sequence.}
\subjclass[2010]{11G32, 12F10 (primary),  37P05, 37P15
(secondary).}
\title{ Arithmetic Monodromy Groups of Dynamical Belyi maps}
\author{\"{O}zlem Ejder}
\address{Bo\u{g}azi\c{c}i  University, Istanbul}
\email{ozlem.ejder@boun.edu.tr}
\begin{document}

\begin{abstract}
We consider a large family of dynamical Belyi maps of arbitrary degree and study
the arithmetic monodromy groups attached to the iterates of such maps.  Building on the results of Bouw-Ejder-Karemaker on the geometric monodromy groups of these maps, we show that the quotient of the arithmetic monodromy group by the geometric monodromy group has order either $1$ or $2$. Prior to this article, a result of this kind was only known for quadratic maps (Pink) and a few examples in degree $3$.
 \end{abstract}
\maketitle

\section{Introduction}

Let $f:\PP^1_k \to \PP^1_k$ be a rational map of degree $d$ defined over a number field $k$. For each $n\geq 1$, define the $n$-th iterate of $f$ by $f^n=f\circ\ldots\circ f$. It was Odoni \cite{Odoni85} who first studied the Galois theory of the iterates of $f$ mainly for its applications in dynamical systems. Assume $f$ is postcritically finite (PCF), i.e., the orbit of each critical point is finite. Let $P=\{f^n(x) \in \PP^1(k) : x \text{ is a critical point of } f \text{ and } n\geq 1\}$. Then the iterates $f^n$ are unbranched outside the finite set $P$. For a point $x_0  \in \PP^1_k\backslash P$, one can construct a tree $T$ whose leaves are the points in $f^{-n}(x_0)$ for $n\geq 1$. We obtain a representation of the \'etale fundamental group of $\PP^1_k\backslash P$ (resp., $\PP^1_{\bar{k}}\backslash P$) inside the automorphism group of the tree $T$. We call the image of such map the arithmetic (resp., geometric) monodromy group $G^{\text{arith}}$ (resp.,  $G^{\text{geom}}$) of $f$. See Section~\ref{sec:mon} for details. For PCF maps, the geometric fundamental group is topologically finitely generated.

Pink \cite{Pinkpolyn} \cite{Pinkrational} has studied the case $d=2$ extensively. He showed that the arithmetic and geometric monodromy groups of the quadratic PCF maps are determined only by the combinatorial data of the postcritical orbit $P$. Moreover he described the quotient group $G^{\text{arith}}/G^{\text{geom}}$ for quadratic polynomials and quadratic morphisms with infinite postcritical orbit $P$. 

Similarly the article \cite{BEK} determines the Galois groups attached to a large class of Belyi maps of any degree $d$. A rational map $f: \PP^1_k \to \PP^1_k$ is called a \emph{Belyi map} if it is branched exactly over $\{0,1,\infty\}$. The authors of \cite{BEK} show that the geometric monodromy group is again only determined by their combinatorial type for Belyi maps $f: \PP^1_k \to \PP^1_k$ with exactly three ramification points; $0,1,\infty$ which are fixed by $f$. We call these maps normalized, single cycle genus zero Belyi maps. A dynamical Belyi map is a Belyi map $f: \PP^1_k \to \PP^1_k$ where $f(\{0,1,\infty\}) \subset \{0,1,\infty\}$. All maps considered in \cite{BEK} are all normalized dynamical Belyi maps which are all PCF. 

This article is essentially a sequel to \cite{BEK} where a condition is described for the quotient of the arithmetic and the geometric monodromy group of normalized single cycle dynamical Belyi maps to be trivial. This is done by introducing a product discriminant. In this article, we use group theoretical tools to describe the normalizer of the geometric monodromy group inside the automorphism group of the tree $T$. This helps us bound the size of the quotient group $G^{\text{arith}}/G^{\text{geom}}$ for general single cycle dynamical Belyi maps. In particular we prove Theorem~\ref{thm:main} which states that the quotient $G^{\text{arith}}/G^{\text{geom}}$has order at most $2$ for all but finitely many of these maps. 

See also the article \cite{Jones-Manes} for the Galois groups of iterates of quadratic maps, \cite{AHMAD2021} for a similar result for $f(x)=x^2-1$, \cite{BFHJY} for an example in degree $3$, and \cite{Jones-survey} for a fantastic survey on the subject.


\section{Automorphism Group of $T$}

Let $d \geq 3$. Let $T$ be the infinite regular $d$-ary tree whose vertices are the finite words over the alphabet $\{0,\ldots,d-1\}$. For any integer $n\geq 1$, we let $T_n$ denote the finite rooted subtree whose vertices are the words of length at most $n$. We will call the set of words of length $n$  the level $n$ of $T$. We will use the notation $W:=\Aut(T)$ and $W_n:=\Aut(T_n)$. 

We embed  $W^d:= W \times \ldots \times W$ into $W$ by identifying the complete subtrees rooted at level one of the tree $T$ with $T$ itself. The image of the embedding $W^d \into W$ is given by 
the set of automorphisms acting trivially on the first level. 
The exact sequence $1 \to W^d  \to W \to W_1 \to 1$ splits and gives the semi direct product 
\[ W \simeq W^d \rtimes S_d \text{ and }  W_n \simeq W_{n-1}^d \rtimes S_d. \]
In other words, $W$ and $W_n$ have a wreath product structure:
\[W \simeq W \wr S_d \text{ and } W_n \simeq W_{n-1} \wr S_d  \]
for $n\geq 2$. 

We denote an element of $W$ (resp. $W_n$) by $(x_1,\ldots x_d)\tau$ where $x_i$ are in $W$ (resp. $W_{n-1}$) and $\tau \in S_d$. 
We have the following relations in $W$:
\begin{equation}\label{eq:relations} \begin{aligned} (x_1,\ldots, x_d) (y_1,\ldots, y_d)&= (x_1y_1,\ldots,x_d y_d) \\
                   \tau(x_1,\ldots,x_d) &= (x_{\tau^{-1}(1)},\ldots,x_{\tau^{-1}(d)}) \tau \\
 \end{aligned}
\end{equation}

We embed $S_d$ into $W$ by the map $\tau \mapsto (-,\ldots,-)\tau$. We denote the automorphism $(-,\ldots,-)\tau$ simply by $\tau$ in $W$. Here $-$ denotes the identity. For every $m\leq n$, we write $\pi_m$ for the natural projection
\[
\pi_m: W_n \to W_m,
\]
which corresponds to restricting the action of an element of
$W_n$ to the subtree $T_m$ consisting of the levels $0, 1, \ldots, m$. Abusing the notation let $\pi_m: W \to W_m$ also denote the natural projection onto the finite level $m$. We denote the image of an element $w$ under $\pi_m$ as $w_{|T_m}$.

Let $G$ be a subgroup of $W$. For each $n\geq 1$, we define $G_n:=\pi_n(G) \subset W_n$. Next we define some subgroups of $W$ which will be essential in Theorem~\ref{thm:bek} where we describe the geometric monodromy groups of dynamical Belyi maps. To define these groups, we first need to define a \emph{product sign} map on $W$.

\begin{defn}\label{def:sgn2}
  Define $\sgn_2: W_2 \to \{\pm 1\}$ by
setting
 \begin{equation}\label{eq:wrsgn}
 \sgn_2((x_1, \ldots, x_d)\tau) = \sgn(\tau) \prod_{i=1}^d
 \sgn(x_i).
 \end{equation}
Here $\sgn$ is the usual sign on $W_1$ via the identification $
W_1\simeq S_d$ induced by the choice of labeling of the vertices.
We define
\begin{equation}\label{eq:sgn2}
\sgn_2:=\sgn_2\circ \pi_2: W \to \{\pm 1\} 
\end{equation}

\end{defn}
Note that we abuse the notation and denote the maps $W \to W_2$ and $W_n \to W_2$ both by $\pi_2$.

\begin{defn}\label{def:En} \hfill
\begin{enumerate}
\item Define the subgroup $E_n\subseteq W_n$ by
\[
E_n=\begin{cases} W_1 &\text{ if }n=1,\\ (E_{n-1}\wr
E_1)\cap \ker(\sgn_2)\subseteq  W_n
&\text{ otherwise}
\end{cases}
\]  and
\[  E:=\ilim_{n} E_n \subseteq W \]

\item Define the subgroup $U_n\subseteq W_n$ as the $n$-fold iterated wreath product of $A_d$ and
 \[ U:=\ilim_{n} U_n \subseteq W \]
 \end{enumerate}
\end{defn}

\begin{remark}
Note that it follows from Definition~\ref{def:En} that  $E \simeq (E \wr S_d) \cap \ker(\sgn_2)$. Let $w=(x_1,\ldots,x_d)\tau \in E$. By definition $w_{|T_n}$ is in $E_n$ for any $n\geq 1$. Hence $w_{|{T_2}} \in \ker(\sgn_2)$ and ${x_i}_{|T_{n-1}}$ is in $E_{n-1}$ for all $i$ which implies that $x_i \in E$ for all $i\geq 1$. Conversely, let $w=(x_1,\ldots,x_d)\tau \in W$. If $x_i \in E$ for all $i\geq 1$ and $w_{|{T_2}} \in \ker(\sgn_2)$, then $w_{|T_n} \in E_n$ and hence $w$ is in $E$.

Similarly, $w \in U$ if and only if $x_i \in U$ for each $i$ and $\tau \in A_d$.

\end{remark}


\section{Belyi Maps}\label{sec:belyi}
A (\emph{genus zero}) \emph{Belyi map} is a rational map $f: \PP^1_{\C} \to \PP^1_{\C}$ such that $f$ is branched exactly over $x_1=0, x_2=1,$ and $x_3=\infty$. It is called a \emph{dynamical Belyi} map if $f(\{0,1,\infty\})\subset \{0,1,\infty\}$. Hence the iterates of a dynamical Belyi map are also dynamical Belyi maps.

A Belyi map is called \emph{single cycle} if there is a unique ramification point over each of the three branch points. It is called \emph{normalized} if $f(0)=0$, $f(1)=1$, and $f(\infty)=\infty$. Hence a normalized Belyi map is dynamical.

The \emph{combinatorial type} of a single cycle Belyi map is the tuple $(d; e_1,e_2,e_3)$ where $d$ denotes the degree of $f$ and $e_i$ denotes the ramification index of the unique ramification point above each $x_i$. 
An abstract type is a tuple $(d; e_1, e_2, e_3)$ such that $2\leq e_1\leq e_2\leq e_3\leq d$ and $e_1+e_2+e_3=2d+1$. For each abstract type $\underline{C}$, there exists a unique normalized Belyi map of type $\underline{C}$ which can be defined over $\Q$. See \cite[Proposition 1]{ABEGKM} for a proof. 

Notice that a normalized dynamical Belyi map is postcritically finite (PCF), i.e., the orbit of each critical point is finite. In this paper, we will focus on the genus zero, single cycle normalized Belyi maps which form a large class of dynamical Belyi maps.


\section{Monodromy Groups of Dynamical Belyi Maps}\label{sec:mon}

Let $k$ be a number field. Fix an algebraic closure $\bar{k}$ of $k$. Let $P=\{0,1,\infty\}$ and 
 let $f$ be a dynamical Belyi map of combinatorial type $\underline{C}$ defined over $k$. 

Let $x_0 \in \PP^1_k(k)\backslash P$. Then each $f^n$ is a connected unramified covering of $\PP^1_k \backslash P$, hence it is determined  by the monodromy action of $\pi_1^{\acute{e}t}(\PP_k^1 \backslash P, x_0)$ on $f^{-n}(x_0)$) up to isomorphism. Let $T_{x_0}$ be the tree defined as follows: it is rooted at $x_0$, the leaves of $T_{x_0}$ are the points of $f^{-n}(x_0)$ for all $n\geq 1$, and the two leaves $p,q$ are connected if $f(p)=q$.  Varying $n$, associated monodromy defines a representation 
\begin{equation}\label{eq:rho}
 \rho: \pi_1^{\acute{e}t}(\PP_k^1 \backslash P, x_0) \to \Aut(T_{x_0})
\end{equation} 
whose image we call the \emph{arithmetic monodromy group} $G^{\text{arith}}(f)$ of $f$. One can also study this representation over $\bar{k}$ and obtain 
\[ \pi_1^{\acute{e}t}(\PP_{\bar{k}}^1 \backslash P, x_0) \to \Aut(T_{x_0}).
\]
 We call the image of the map in this case the \emph{geometric monodromy group} $G^{\text{geom}}(f)$. We note that these monodromy groups are unique up to conjugation by the elements of $\Aut(T_{x_0})$. The arithmetic and the geometric monodromy groups of $f$ fit into an exact sequence as follows:
 \begin{equation}\label{eq:exact}
   \begin{tikzcd}
1\arrow{r}  &  \pi_1^{\acute{e}t}(\PP_{\bar{k}}^1 \backslash P, x_0) \arrow{r} \arrow{d}& \pi_1^{\acute{e}t}(\PP_k^1 \backslash P, x_0) \arrow{r} \arrow{d} &\Gal(\bar{k}/ k) \arrow{r} \arrow{d} &1  \\
   1 \arrow{r} &   G^{\text{geom}}(f) \arrow{r} &G^{\text{arith}}(f) \arrow{r}  &\Gal(L /k) \arrow{r} &1 \\
     \end{tikzcd}
  \end{equation}
for some field extension $L$ of $k$. Determining this field $L$ or its degree is a fundamental problem. The groups $G^{\text{geom}} $ and $G^{\text{arith}}$ are profinite groups and they are embedded into $\Aut(T_{x_0})$ by construction. 

Let $x_1,\ldots,x_d$ denote the points in $f^{-1}(x_0)$. Then $f^{-(n+1)}(x_0)=f^{-n}(x_1) \cup \ldots \cup f^{-n}(x_d)$. Hence after deleting the root $x_0$,  the tree $T_{x_0}$ decomposes into the $d$ regular trees $T_{x_1},\ldots,T_{x_d}$. Let $\tilde{P}$ denote the set $f^{-1}(P)$. Then by functoriality, $f: \PP^1_k \backslash \tilde{P} \to \PP^1_k \backslash P$ induces a map on the fundamental groups: 
\begin{equation}\label{eq:f*}
f^i_{*}: \pi_1^{\acute{e}t}(\PP_k^1 \backslash \tilde{P}, x_i) \to \pi_1^{\acute{e}t}(\PP_k^1 \backslash P, x_0).
 \end{equation} for any $i \in \{1,\ldots,d\}$.
 Here we use the notation $f^i_*$ to specify the base point $x_i$ of $\pi_1^{\acute{e}t}(\PP_k^1 \backslash \tilde{P}, x_i)$. This should not be confused with the $i$'th iteration of $f$.
 Similarly, by functoriality, the inclusion map $\PP_k^1\backslash \tilde{P} \hookrightarrow  \PP_k^1 \backslash P$ induces the surjective map
 \begin{equation}\label{eq:id}
id_{*}: \pi_1^{\acute{e}t}(\PP_k^1 \backslash \tilde{P}, x_i) \to \pi_1^{\acute{e}t}(\PP_k^1 \backslash P, x_i).
 \end{equation}
 for any $i \in \{1,\ldots,d\}$.
The action of $\pi_1^{\acute{e}t}(\PP_k^1 \backslash \tilde{P}, x_i) $ on $T_{x_1}$ through $f^i_{*}$ coincides with its natural action on $T_{x_i}$. That is, the image of the composition of $f^i_{*}$ with \eqref{eq:rho} is exactly the automorphisms in the image of $\rho$ that fix $x_i$. However these automorphisms do not have to fix any of the other $x_j$ for $j\neq i $. Since we would like to describe the action on the subtrees $T_{x_i}$ for all $i$,
we construct the following subgroup:
\[ 
F_k:=\{ (\gamma_1,\ldots,\gamma_d) \in \prod_{i=1}^d \pi_1^{\acute{e}t}(\PP_k^1 \backslash \tilde{P}, x_i) : \rho\circ f^i_{*} (\gamma_i)=\rho\circ f^j_{*}(\gamma_j )\text{ for any } i,j \in \{1,\ldots,d\} \}.
\]
Now an automorphism in the image of the composition $F_k \to \pi_1^{\acute{e}t}(\PP_k^1 \backslash \tilde{P}, x_i)$ (for any $i$) and $\rho \circ {f^i}_{*}$ fixes $x_i$ for all $i$. This action coincides with the natural action of $F_k$ composed with \eqref{eq:id} on $ \sqcup_{i=1}^d T_{x_i}$. This is given in the left upper square of Diagram~\eqref{diagram:mon}. The first vertical isomorphism on the left is obtained by a change of base point from $x_1$ to $x_0$.  We identify the trees $T_{x_0},T_{x_1},\ldots,T_{x_d}$ with the regular $d$-ary tree $T$ introduced earlier. We can do this exactly because $x_0 \not\in P$.

\begin{equation}\label{diagram:mon}
   \begin{tikzcd}
\pi_1^{\acute{e}t}(\PP_k^1 \backslash P, x_0) \arrow{r}{\rho}      & \Aut(T_{x_0})  \arrow{r}{\simeq}                                 & W \\
     F_k       \arrow{r}    \arrow{u}{f_{*}} \arrow{d}{pr_1}                  &   \Aut(T_{x_1}) \times \ldots \times \Aut(T_{x_d})  \arrow{dd}{pr_1} \arrow[hookrightarrow]{u} \arrow{r}{\simeq} & W^d  \arrow[hookrightarrow]{u} \arrow{dd}  \\
  \pi_1^{\acute{e}t}(\PP_k^1 \backslash \tilde{P}, x_1) \arrow{d}{\simeq} \arrow{dr}    \\
      \pi_1^{\acute{e}t}(\PP_k^1 \backslash \tilde{P}, x_0)  \arrow[two heads]{d}{id_{*}} \arrow{dr}        &  \Aut(T_{x_1}  )  \arrow{d}{\simeq}   \arrow{r}{\simeq}& W \ar[equal]{d}                            \\
       \pi_1^{\acute{e}t}(\PP_k^1 \backslash P, x_0)         \arrow{r}                                        &       \Aut(T_{x_0})     \arrow{r}{\simeq}& W                                                     
 \end{tikzcd}
\end{equation}

We also note here that a discussion of this kind is given in \cite[pg~20]{Pinkpolyn} for rational maps of degree $2$. We generalize Pink's argument to any $d$ here. In the case $d=2$, if an automorphism fixes $x_1$, then it also has to fix $x_2$. Hence there is no need to define a subgroup $F_k$ and hence Pink only uses $ \pi_1^{\acute{e}t}(\PP_k^1 \backslash \tilde{P}, x_1)$.

By the construction of $G^{\text{arith}}$, \eqref{diagram:mon} induces a commutative diagram

\begin{equation}\label{diagram:Gar}
   \begin{tikzcd}
\pi_1^{\acute{e}t}(\PP_k^1 \backslash P, x_0) \arrow{r}{\rho}     &G^{\text{arith}}  \arrow[hookrightarrow]{r}                                & W \\
     F_k       \arrow{r}    \arrow{u}{f_{*}} \arrow{d}{pr_1}                  & G^{\text{arith}} \cap W^d  \arrow{d}{pr_1} \arrow[hookrightarrow]{u} \arrow{r}& W^d  \arrow[hookrightarrow]{u} \arrow{d}  \\
       \pi_1^{\acute{e}t}(\PP_k^1 \backslash P, x_0)         \arrow[two heads]{r}                                        &      G^{\text{arith}}     \arrow[hookrightarrow]{r} & W                                                     
 \end{tikzcd}
\end{equation}

We obtain a similar diagram for $G^{\text{geom}}$ when we replace $k$ by $\bar{k}$. 
\begin{equation}\label{diagram:Ggeom}
   \begin{tikzcd}
\pi_1^{\acute{e}t}(\PP_{\bar{k}}^1 \backslash P, x_0) \arrow{r}{\rho}     &G^{\text{geom}}  \arrow[hookrightarrow]{r}                                & W \\
     F_{\bar{k}}       \arrow{r}    \arrow{u}{f_{*}} \arrow{d}{pr_1}                  & G^{\text{geom}} \cap W^d  \arrow{d}{pr_1} \arrow[hookrightarrow]{u} \arrow{r}& W^d  \arrow[hookrightarrow]{u} \arrow{d}  \\
       \pi_1^{\acute{e}t}(\PP_{\bar{k}}^1 \backslash P, x_0)         \arrow[two heads]{r}                                        &      G^{\text{geom}}     \arrow[hookrightarrow]{r} & W                                                     
 \end{tikzcd}
\end{equation}

 Diagrams~\eqref{diagram:Gar} and \eqref{diagram:Ggeom} will be used in the proof of the main theorem in Section~\ref{sec:6}. For single cycle normalized Belyi maps, the group $G^{\text{geom}}$ is determined in \cite{BEK}.
Remember the groups $E$ and $U$ are defined in Definition~\ref{def:En}. We first give an existing result on the geometric monodromy groups of Belyi maps at level one.

\begin{theorem}\cite[Theorem~5.3]{liuosserman}\label{Lui-Osserman} 
Let $f$ be a normalized Belyi map of type $\underline{C}=(d; e_1, e_2,
e_3)\notin\{(4;3,3,3), (6;4,4,5)\}$. 
\begin{enumerate}

\item  If at least one of the $e_j$ is even, then $G_1^{\text{geom}}(f) \simeq S_d$. 
\item If all $e_j$ are odd, then $G_1^{\text{geom}}(f)  \simeq A_d.$ 
\end{enumerate}
\end{theorem}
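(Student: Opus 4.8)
The plan is to translate the statement into a purely group-theoretic one and then feed it into a classification of primitive groups generated by cycles. Writing $G:=G_1^{\text{geom}}(f)\le S_d$ for the level-one monodromy group, the standard generators of $\pi_1^{\acute{e}t}(\PP^1_{\bar k}\setminus\{0,1,\infty\})$ map to permutations $\sigma_0,\sigma_1,\sigma_\infty$ with $\sigma_0\sigma_1\sigma_\infty=1$ which generate $G$. Because $f$ is single cycle, each $\sigma_j$ is a single $e_j$-cycle fixing the remaining $d-e_j$ points, and transitivity of $G$ is exactly the connectedness of the cover. The defining condition of an abstract type furnishes the genus-zero identity $e_1+e_2+e_3=2d+1$, and by convention $2\le e_1\le e_2\le e_3\le d$. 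Since a cycle of length $e$ is even iff $e$ is odd, all $\sigma_j\in A_d$ precisely when all $e_j$ are odd, and otherwise $G\not\le A_d$. Hence it suffices to prove the single inclusion $A_d\subseteq G$: the parity dichotomy then forces $G=A_d$ in case (2) and $G=S_d$ in case (1).

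The key structural step is to show that $G$ is \emph{primitive}. Suppose instead there is a block system with $m$ blocks of size $b$, where $1<b,m<d$. If $\sigma_j$ moves a block $B$, then $B$ can contain no fixed point of $\sigma_j$ (a fixed point would have to remain in $B$ while $B$ is displaced), so $B$ lies entirely in the support of $\sigma_j$; since the support is a single $\langle\sigma_j\rangle$-orbit, the induced permutation $\bar\sigma_j$ on the $m$ blocks is either trivial (support inside one block) or a single $p_j$-cycle on full blocks with $e_j=p_j b$, forcing $b\mid e_j$. The relation $\bar\sigma_0\bar\sigma_1\bar\sigma_\infty=1$ shows the number of nontrivial $\bar\sigma_j$ is $0$, $2$, or $3$. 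It cannot be $0$ (the transitive block action would be trivial) and cannot be $3$ (then $b\mid e_j$ for all $j$, so $b\mid\sum e_j=2d+1$ while $b\mid d$, giving $b\mid 1$). If exactly two are nontrivial, their product being trivial makes the block action cyclic and transitive, hence each is a full $m$-cycle; but a single cycle inducing an $m$-cycle on full blocks must move all $d$ points, so two of the $e_j$ equal $d$ and the third equals $2d+1-2d=1$, which is impossible. Therefore $G$ is primitive.

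With primitivity in hand, I would finish by a Jordan-type input. If $d-e_1\ge 3$, then $\sigma_0$ is a cycle fixing at least three points inside the primitive group $G$, and the classification of primitive groups containing such a cycle (Jordan's theorem and its completion, resting ultimately on CFSG) forces $A_d\subseteq G$; with the parity dichotomy this proves the theorem with no exceptions. The only alternative is $d-e_1\le 2$, i.e.\ every $e_j\ge d-2$; then $2d+1=\sum e_j\ge 3(d-2)$ gives $d\le 7$, leaving finitely many types to settle by direct inspection of $(\sigma_0,\sigma_1,\sigma_\infty)$. This finite analysis is exactly where the genuine exceptions emerge: for $(6;4,4,5)$ the three cycles generate the primitive group $\PGL_2(5)\cong S_5\subsetneq S_6$, and $(4;3,3,3)$ must be excluded as well. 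I expect the main obstacle to be precisely this last step — invoking the deep classification that a primitive group containing a cycle with at least three fixed points contains $A_d$, and then carrying out the finite but delicate small-degree bookkeeping that isolates exactly $(4;3,3,3)$ and $(6;4,4,5)$; by contrast the reduction, the parity argument, and the primitivity proof above are elementary.
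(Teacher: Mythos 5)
This statement is quoted from Liu--Osserman and the paper supplies no proof of its own, so there is nothing internal to compare against; what you have written is an independent argument, and in outline it is sound. The reduction to proving $A_d\subseteq G$ via the parity of the $e_j$ is correct, and your primitivity argument is complete and genuinely elementary: a block moved by a single cycle must lie inside its support, so the induced block permutation is either trivial or a single cycle on full blocks covering the support (whence $b\mid e_j$), and the three-way count of nontrivial $\bar\sigma_j$ is killed by $b\mid d$, $e_1+e_2+e_3=2d+1$ and $e_1\ge 2$. The appeal to the classification of primitive groups containing a cycle with at least three fixed points (Jordan for prime length, the CFSG-based completion in general) is correctly applied, since $\sigma_0$ fixes $d-e_1\ge 3$ points in that branch; note this is a different, more group-theoretic route than the cited source, which obtains the monodromy groups from the irreducibility of pure-cycle Hurwitz spaces. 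Two things remain. First, the residual check for $d-e_1\le 2$ is asserted but not done; the list is short --- after discarding the two excluded types it is $(3;2,2,3)$, $(4;2,3,4)$, $(5;3,3,5)$, $(5;3,4,4)$, $(7;5,5,5)$ --- and each case follows from the known transitive groups of degree at most $7$ (e.g.\ no proper primitive group of degree $7$ has order divisible by $5$, so $(7;5,5,5)$ gives $A_7$), but this must be written out for the proof to be complete. Second, a small inaccuracy: for $(4;3,3,3)$ any transitive subgroup of $S_4$ containing a $3$-cycle already contains $A_4$, so the level-one group there is in fact $A_4$; that type is excluded for a different reason (non-uniqueness of the cover), unlike $(6;4,4,5)$, where the group genuinely drops to $\PGL_2(5)\simeq S_5$. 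Neither point affects the validity of your argument for the types the theorem actually covers.
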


The next result describes the geometric monodromy group $G^{\text{geom}}$ for single cycle, normalized dynamical Belyi maps.

\begin{theorem}\cite[Theorem 2.3.1]{BEK}\label{thm:bek} 
Let $f$ be a normalized Belyi map of type $\underline{C}=(d; e_1, e_2,
e_3)\notin\{(4;3,3,3), (6;4,4,5)\}$.  
\begin{enumerate}
\item Assume that at least one of the $e_j$ is even.  Then
$G^{\text{geom}}(f)  \simeq E$.
\item Assume that $e_i$ are all odd. Then 
$G^{\text{geom}}(f)\simeq U$.
\end{enumerate}
\end{theorem}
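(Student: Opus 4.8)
The plan is to compute $G:=G^{\text{geom}}(f)$ level by level, proving $G_n=U_n$ (resp.\ $G_n=E_n$) by induction on $n$ and then passing to the inverse limit $G=\ilim_n G_n$. The two structural inputs are the self-similar description of $G$ furnished by Diagram~\eqref{diagram:Ggeom} and the level-one computation of Liu--Osserman (Theorem~\ref{Lui-Osserman}). First I would record the self-similarity: because $f$ is a normalized dynamical Belyi map, each subtree $T_{x_i}$ is again the tree of preimages of the \emph{same} map, so Diagram~\eqref{diagram:Ggeom} gives that $pr_1\colon G\cap W^d\to G$ is surjective; transitivity of $G$ on level one (the covers $f^n$ are connected) then makes every coordinate projection $pr_i\colon G\cap W^d\to G$ onto and shows that $G$ is state-closed, i.e.\ every state $x_i$ of an element $(x_1,\dots,x_d)\tau\in G$ again lies in $G$, and moreover is itself a monodromy generator of a map of the same combinatorial type. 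The base case $n=1$ is exactly Theorem~\ref{Lui-Osserman}: $G_1=S_d=E_1$ in the even case and $G_1=A_d=U_1$ in the all-odd case.

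For the upper bound, state-closure together with the inductive hypothesis forces $x_i\in G_{n-1}=U_{n-1}$ (resp.\ $E_{n-1}$) for every $(x_1,\dots,x_d)\tau\in G_n$, and $\tau\in G_1=A_d$ (resp.\ $S_d$); hence in the all-odd case $G_n\subseteq U_{n-1}\wr A_d=U_n$, giving one of the two desired inclusions at once. In the even case one needs in addition that $G_n\subseteq\ker(\sgn_2)$, and this is the conceptual heart of that case. I would compute $\sgn_2$ on the level-two portraits of the generators $\rho(\gamma_0),\rho(\gamma_1),\rho(\gamma_\infty)$ directly from the branch-cycle data, using that $\gamma_j$ is a single $e_j$-cycle at level one and that $e_1+e_2+e_3=2d+1$, to obtain $\sgn_2(\rho(\gamma_j))=1$ for each $j$. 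Since $\sgn_2\circ\pi_2$ is a homomorphism this yields $G\subseteq\ker(\sgn_2)$, and by state-closure the same vanishing holds at every vertex, which is precisely the recursive condition defining $E$ in Definition~\ref{def:En}; thus $G_n\subseteq(E_{n-1}\wr S_d)\cap\ker(\sgn_2)=E_n$.

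The lower bound is where the real work lies. Set $N:=G_n\cap W_{n-1}^d$. By surjectivity of the $pr_i$ and the inductive hypothesis, $N$ is a subdirect subgroup of $U_{n-1}^d$ (resp.\ $E_{n-1}^d$) that is normalized by a subgroup of $G_n$ surjecting onto the transitive group $A_d$ (resp.\ $S_d$) permuting the factors. I would then run a Goursat/subdirect-product argument. In the all-odd case every $e_i$ is odd and $\geq 3$, so $2d+1=e_1+e_2+e_3\geq 9$, i.e.\ $d\geq 4$; the value $d=4$ is the excluded type $(4;3,3,3)$, so in fact $d\geq 5$ and hence $A_d$, and with it the iterated wreath product $U_{n-1}$, is perfect. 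A subdirect product of a perfect group that surjects onto each factor and is invariant under the transitive factor-permutation must be the full product $U_{n-1}^d$, and combined with the surjection onto $A_d$ this gives $G_n=U_n$. In the even case $E_{n-1}$ is not perfect, but its only abelian quotient is detected by $\sgn_2$; one repeats the subdirect analysis modulo this single $\Z/2$ to show $N$ is as large as the sign condition allows, concluding $G_n=E_n$. Passing to the inverse limit gives $G=U$ (resp.\ $E$).

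The main obstacle I anticipate is exactly this last subdirect-product step: controlling $N=G_n\cap W_{n-1}^d$ requires knowing that $U_{n-1}$ (resp.\ $E_{n-1}$) has no unexpected common quotients across the $d$ coordinates, which rests on perfectness of the iterated wreath products of $A_d$ for $d\geq 5$ and, in the even case, on pinning down the abelianization of $E_{n-1}$ exactly as $\sgn_2$. The degenerate configurations are precisely those removed by the hypothesis $\underline{C}\notin\{(4;3,3,3),(6;4,4,5)\}$, where the level-one image fails to be $S_d$ or $A_d$; these must be excluded for both the Liu--Osserman input and the perfectness argument to go through.
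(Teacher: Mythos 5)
First, a remark on the comparison itself: the paper does not prove Theorem~\ref{thm:bek} --- it is imported verbatim from \cite[Theorem~2.3.1]{BEK} --- so your proposal has to be measured against the proof in that reference. Your overall architecture (induction on the level $n$, base case from Theorem~\ref{Lui-Osserman}, self-similarity of $G^{\text{geom}}$ via Diagram~\eqref{diagram:Ggeom}, upper bound via the character $\sgn_2$ evaluated on the branch-cycle generators) does match the strategy used there, and the upper-bound half is sound: for a normalized single-cycle map each generator $\rho(\gamma_j)$ has exactly one nontrivial state, located at the unique ramification point over $x_j$, whose level-one restriction is again an $e_j$-cycle, so $\sgn_2(\rho(\gamma_j))=(-1)^{e_j-1}\cdot(-1)^{e_j-1}=1$, and state-closure propagates this down the tree.

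The lower bound, however, has a genuine gap. The claim that a subdirect subgroup of $U_{n-1}^d$ which surjects onto every factor and is invariant under a transitive permutation of the factors must be all of $U_{n-1}^d$ is false: the diagonal $\{(u,\dots,u)\}$ is a counterexample --- it is perfect, subdirect, surjects onto each factor, and is invariant under all of $S_d$. Perfectness buys you nothing against diagonal-type subgroups; Goursat's lemma only says the obstruction is a common quotient, and for the diagonal that quotient is the whole group. In the even case the situation is worse still, because the abelianization of $E_{n-1}$ is not a single $\F_2$ detected by $\sgn_2$: already $W_{n-1}$ has abelianization $(\F_2)^{n-1}$ via the level-wise sign characters, and $E_{n-1}$ retains most of this, so ``repeating the analysis modulo one $\Z/2$'' does not close the argument either. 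What is actually needed, and what \cite{BEK} supplies, is a mechanism to decouple the coordinates: since $f$ is single-cycle and normalized, each $\rho(\gamma_j)$ has a single nontrivial state, so suitable powers such as $\rho(\gamma_j)^{e_j}$ land in $W^d$ while being trivial in some coordinates and nontrivial in others. Feeding these explicit elements into a generation criterion for the iterated wreath products is what rules out diagonal-type subgroups and forces $G_n\cap W_{n-1}^d$ to be all of $U_{n-1}^d$ (resp.\ the index-two subgroup $E_{n-1}^d\cap\ker(\sgn_2)$). Without this ingredient your induction does not close.
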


In the same paper, a criteria for the triviality of the quotient $G^{\text{arith}} /G^{\text{geom}}$ is given. See \cite[Corollary~2.4.6]{BEK}. In this article, we prove that the order of the quotient group $G^{\text{arith}} /G^{\text{geom}}$ is either $1$ or $2$. As seen in \eqref{eq:exact}, $G^{\text{arith}}$ can be seen as a subgroup of the normalizer of $G^{\text{geom}}$ in $W$. We will study the normalizer of the subgroups $E$ and $U$ in the next section.

 
\section{Normalizer of $E$ and $U$ inside $W$}
Let $G$ be either $E$ or $U$. Let $N(G)$ denote the normalizer of $G$ in $W$. Given an element $x \in N(G)$, we denote its image in the quotient $N/G$ by $x \pmod G$.
\begin{proposition}\label{prop:ncomp}
 Let $G$ be one of the groups $E$ or $U$. Let $x=(x_1,\ldots,x_d)\tau \in N(G)$. Then  \hfill
\begin{enumerate}
 \item $x_i$ is in $N(G)$ for all $1\leq i\leq d$.
\item $x_i x_j^{-1}$ is in $G$ for all $1\leq i,j \leq d$.
\end{enumerate}
\end{proposition}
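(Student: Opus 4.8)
The plan is to exploit two structural features of $G\in\{E,U\}$ recorded in the Remark after Definition~\ref{def:En}: \emph{self-similarity} (if $(w_1,\dots,w_d)\gamma\in G$ then every $w_i\in G$), together with the behaviour of $G\cap W^d$. Since $W^d=\ker(W\to W_1)$ is normal in $W$ and $x\in N(G)\subseteq W$, conjugation by $x$ preserves both $G$ and $W^d$, hence restricts to an automorphism $c_x$ of $N:=G\cap W^d$. The wreath-product identities \eqref{eq:relations} give, for $g=(g_1,\dots,g_d)\in W^d$,
\[
x\,g\,x^{-1}=\bigl(x_1\,g_{\tau^{-1}(1)}\,x_1^{-1},\ \dots,\ x_d\,g_{\tau^{-1}(d)}\,x_d^{-1}\bigr),
\]
so that $pr_i\circ c_x(g)=x_i\,g_{\tau^{-1}(i)}\,x_i^{-1}$. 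A preliminary fact I would record is that each coordinate projection is surjective, $pr_j(N)=G$ for all $j$: for $U$ one has $N=U\times\cdots\times U$ outright, while for $E$ one places a given $h\in E$ in slot $j$ and uses a second slot (available since $d\geq 3$) to absorb the value of $\sgn\bigl(h_{|T_1}\bigr)$ and so meet the $\ker(\sgn_2)$ condition.

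For part (1), I would run a pure surjectivity argument. Because $c_x$ is an automorphism of $N$, the image of $pr_i\circ c_x$ equals $pr_i(N)=G$. On the other hand, as $g$ ranges over $N$ the entry $g_{\tau^{-1}(i)}$ ranges over $pr_{\tau^{-1}(i)}(N)=G$, so this same image equals $\{x_i\,h\,x_i^{-1}:h\in G\}=x_iGx_i^{-1}$. Comparing the two descriptions yields $x_iGx_i^{-1}=G$, i.e. $x_i\in N(G)$, for every $i$. This treats $E$ and $U$ uniformly and sidesteps any one-sided-inclusion issue for profinite groups.

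For part (2), I would conjugate the ``rigid'' permutations instead. For $\alpha\in S_d$ the element $(1,\dots,1)\alpha$ lies in $G$ exactly when $\alpha\in A_d$ (for $U$ by definition, for $E$ because $\sgn_2$ of it equals $\sgn(\alpha)$). Conjugating such an element and simplifying with \eqref{eq:relations} gives
\[
x\,(1,\dots,1)\alpha\,x^{-1}=\bigl(x_1\,x_{\beta^{-1}(1)}^{-1},\ \dots,\ x_d\,x_{\beta^{-1}(d)}^{-1}\bigr)\beta,\qquad \beta:=\tau\alpha\tau^{-1}\in A_d,
\]
which lies in $G$ since $x\in N(G)$. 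By self-similarity each coordinate $x_i\,x_{\beta^{-1}(i)}^{-1}$ lies in $G$. As $\alpha$ runs over $A_d$ so does $\beta$ (normality of $A_d$ in $S_d$), and since $A_d$ is transitive on $\{1,\dots,d\}$ for $d\geq3$, for any pair $(i,j)$ I may choose $\beta$ with $\beta^{-1}(i)=j$, which gives $x_i x_j^{-1}\in G$.

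I expect the only genuine friction to be bookkeeping: correctly tracking the permutation indices through \eqref{eq:relations} (in particular verifying $\tau(\tau\alpha)^{-1}=\beta^{-1}$), and, for $G=E$, the surjectivity $pr_j(E\cap W^d)=E$, which is where the map $\sgn_2$ and the hypothesis $d\geq 3$ genuinely enter. Everything else is formal once self-similarity and the transitivity of $A_d$ are in place.
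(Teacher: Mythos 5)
Your proof is correct. Part (2) coincides with the paper's argument: both conjugate a rigid even permutation $\alpha\in A_d$ sitting inside $G$, use normality of $A_d$ in $S_d$ and transitivity of $A_d$ (this is where $d\geq 3$ enters) to reach any pair $(i,j)$, and read off $x_ix_j^{-1}\in G$ from self-similarity. Part (1), however, takes a genuinely different route. The paper fixes $g\in G$ and builds an explicit element $y$ of $G\cap W^d$ with $g$ placed in slot $\tau^{-1}(1)$, doubling $g$ into a second slot when $\sgn(g_{|T_1})=-1$ so as to stay in $\ker(\sgn_2)$; conjugating $y$ by $x$ then exhibits $x_1gx_1^{-1}\in G$. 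As written this yields only the inclusion $x_iGx_i^{-1}\subseteq G$, which still has to be upgraded to equality (for instance by running the same argument on $x^{-1}$, whose $i$-th slot is $x_{\tau(i)}^{-1}$, or by counting at each finite level $T_n$ and passing to the limit). Your argument instead isolates the surjectivity $pr_j(G\cap W^d)=G$ as a standalone fact and compares two descriptions of the image of $pr_i\circ c_x$ on $N=G\cap W^d$, obtaining the equality $x_iGx_i^{-1}=G$ in one stroke; this is cleaner on precisely the point the paper leaves implicit, at the modest cost of verifying the projection surjectivity (for $E$ this is the same sign-absorption device the paper uses inside its proof, and in fact only needs $d\geq 2$ there). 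Both routes rest on the same two pillars from the Remark after Definition~\ref{def:En}: self-similarity of $E$ and $U$, and the $\ker(\sgn_2)$ (resp.\ $A_d$) membership criterion.
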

\begin{proof}
Let $x=(x_1,\ldots,x_d)\tau \in N(G)$ and let $g \in G$.  We will show that $x_igx_i^{-1}$ is in $G$ for all $1\leq i\leq d$.
We will assume $i=1$ for simplicity and let $j= \tau^{-1}(1)$. Let $y=(y_1,\ldots,y_d)$ be an element of $W$ such that
$y_j=g$ and $y_i=\id$ for $i\neq j$. Similarly, let $y'=(z_1,\ldots,z_d) \in W$ with $z_1=z_j=g$ and $z_i=\id$ for $i\not\in \{ 1,j\}$. Since $g$ is in $G$,
either $y$ or $y'$ is in $G$. This only depends on the sign of $g_{|T_1}$ in $S_d$. Assume $y\in G$.  We compute
 \[ 
  \begin{aligned}
  xyx^{-1}=&(x_1,\ldots ,x_d)\tau(y_1,\ldots, y_d)\tau^{-1}(x_1^{-1},\ldots x_d^{-1} )\\
                        =&(x_1y_{\tau^{-1}(1)}x_1^{-1}, \ldots, x_dy_{\tau^{-1}(d)}x_d^{-1}).   \\
  \end{aligned}
\] 
Since $x$ is in $N(G)$, $ xyx^{-1} \in G$, and by the construction of $y$, 
\[ x_1y_{\tau^{-1}(1)}x_1^{-1}=x_1y_{j}x_1^{-1}=x_1gx_1^{-1} \]
is also in $G$ and this finishes the proof of the first claim. Notice that if $y\not\in G$, then one can use $y'$ instead. 

For the second claim, fix $i$ and $j$ in $\{1,\ldots,d\}$. We take an element $y=\sigma \in G$ where $\sigma$ is an even cycle such that  $\tau\sigma\tau^{-1}(j)=i$. Then 
 \[ 
  \begin{aligned}
  xyx^{-1}=&(x_1,\ldots,x_d)\tau\sigma\tau^{-1}(x_1^{-1},\ldots,x_d^{-1}) \\
               =&(x_1x^{-1}_{\tau'^{-1}(1)},\ldots,x_dx^{-1} _{\tau'^{-1}(d)})
   \end{aligned}
 \]
 where $\tau'=\tau\sigma\tau^{-1}$. Since $\tau'^{-1}(i)=j$ and $xyx^{-1} \in G$, we have $x_ix_j^{-1}$ is in $G$.
 \end{proof}
\begin{remark}
Note that one can replace $G$ and $N(G)$ by $G_n$ and $N(G)_n$ in Proposition~\ref{prop:ncomp} and its proof.
\end{remark}
Let $\sigma=(1 2) \in S_d$. Remember that we embed $S_d$ into $W$ by $\tau \mapsto (-,\ldots,-)\tau$ for any $\tau \in S_d$.
\begin{defn}
For each $i\geq 1$, define an element of $W$ as follows:
\[ w_i:=
\begin{cases} \sigma & \text{ for } i=1\\
                     (w_{i-1},\ldots,w_{i-1}) &\text{ for } i\geq 2.
\end{cases}
\]
\end{defn}

Let $N(E)$ and $N(U)$ denote the normalizer of $E$ and $U$ in $W$ respectively.  Remember that we denote the restriction of $N(E)$ and $N(U)$ to level $n$ by $N(E)_n$ and $N(U)_n$.

\begin{lemma}\label{lem:w}  \hfill
\begin{enumerate}
\item For $k \geq 1$, the automorphism $w_k$ has order $2$.
\item
For $k \geq 1$, the automorphism $w_k$ is in $N(E) \backslash E$ (resp., $N(U)\backslash U$). 
\end{enumerate}
\end{lemma}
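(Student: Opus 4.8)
The plan is to prove both parts of Lemma~\ref{lem:w} by induction on $k$, exploiting the recursive definition $w_k=(w_{k-1},\ldots,w_{k-1})$ together with the wreath-product descriptions of $E$ and $U$ recorded in the Remark after Definition~\ref{def:En}.

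For part (1), the base case $k=1$ is immediate since $w_1=\sigma=(1\,2)$ is a transposition, hence of order $2$ in $S_d\into W$. For the inductive step, I would use the first multiplication relation in \eqref{eq:relations}: since $w_k=(w_{k-1},\ldots,w_{k-1})$ acts trivially on the first level, its square is $w_k^2=(w_{k-1}^2,\ldots,w_{k-1}^2)$, which equals the identity precisely when $w_{k-1}^2=\id$. By the inductive hypothesis $w_{k-1}$ has order $2$, so $w_k^2=\id$; and $w_k\neq\id$ because $w_{k-1}\neq\id$. This step is routine.

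For part (2), the argument splits into showing $w_k$ normalizes $G$ and showing $w_k\notin G$, for $G=E$ and $G=U$ separately. I would again induct on $k$. For membership in the normalizer, I would use the characterization from the Remark: an element $(x_1,\ldots,x_d)\tau$ lies in $E$ iff each $x_i\in E$ and $\sgn_2$ of the level-$2$ truncation is trivial, and lies in $U$ iff each $x_i\in U$ and $\tau\in A_d$. To check $w_k\in N(G)$, take an arbitrary $g=(g_1,\ldots,g_d)\eta\in G$ and compute $w_k g w_k^{-1}$ using the conjugation relation in \eqref{eq:relations}; since $w_k$ fixes the first level for $k\ge 2$, this conjugation only permutes and simultaneously conjugates the components by $w_{k-1}$, reducing the claim to the inductive hypothesis that $w_{k-1}$ normalizes $G$ one level down, while the top permutation $\eta$ is unchanged so the relevant sign/parity data is preserved. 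The base case $k=1$ must be handled directly: for $U$ one checks $\sigma=(1\,2)$ conjugates $A_d$ to itself (clear, since $A_d\trianglelefteq S_d$) and for $E$ one verifies that conjugation by $\sigma$ preserves the kernel of $\sgn_2$ at level one.

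The main obstacle, and the part deserving the most care, is showing $w_k\notin G$, especially $w_k\notin E$. For $U$ this is relatively clean: tracing the definition, $w_k$ has trivial top permutation for $k\ge2$ but its deepest level-one data ultimately comes from the transposition $\sigma\notin A_d$, so a parity obstruction at the appropriate level shows $w_k\notin U$, with $w_1=\sigma\notin A_d$ as the base case. For $E$ the delicate point is the interaction with the product sign $\sgn_2$: I expect that $w_k$ fails the $\sgn_2$-condition at level two. Concretely, I would compute $\sgn_2(\pi_2(w_k))$ using \eqref{eq:wrsgn}. For $k=1$, $w_1=\sigma$ is a single transposition contributing $\sgn(\sigma)=-1$, so $w_1\notin\ker(\sgn_2)\supseteq E$ at the relevant truncation. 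For $k\ge 2$, $w_k$ acts trivially on the top level so its $\sgn_2$ value is the product $\prod_i\sgn((w_{k-1})_{|T_1})$ of the level-one signs of its $d$ identical components $w_{k-1}$; tracking this product down the recursion should yield an odd total sign, placing $w_k$ outside $\ker(\sgn_2)$ and hence outside $E$. The bookkeeping of how many transposition-signs accumulate as one unwinds $w_k=(w_{k-1},\ldots,w_{k-1})$ down to the $d^{k-1}$ copies of $\sigma$ at the bottom is where I would be most careful, since it is exactly this sign computation that distinguishes $w_k\in N(E)$ from $w_k\in E$.
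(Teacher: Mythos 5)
Your treatment of part (1), of the normalizer membership in part (2), and of $w_k\notin U$ all follow the paper's line of argument and are fine. The genuine gap is in your proposed proof that $w_k\notin E$. You plan to show that $w_k$ fails the $\sgn_2$-condition, i.e.\ that $\sgn_2(\pi_2(w_k))=-1$ for every $k$. This is false for $k\ge 3$: the map $\sgn_2$ only sees the restriction to $T_2$, and for $k\ge 3$ the automorphism $w_k$ acts trivially on the first two levels, so $\sgn_2(w_k)=1$. Indeed, by \eqref{eq:wrsgn} one has $\sgn_2(w_k)=\prod_{i=1}^d\sgn\bigl(\pi_1(w_{k-1})\bigr)$, and $\pi_1(w_{k-1})=\id$ as soon as $k\ge 3$. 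Even for $k=2$ the computation gives $\sgn_2(w_2)=\sgn(\sigma)^d=(-1)^d$, which equals $+1$ whenever $d$ is even, so the sign obstruction already fails there. There are no signs ``accumulating down to the $d^{k-1}$ copies of $\sigma$ at the bottom'': $\sgn_2$ is insensitive to everything below level $2$.

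The correct argument --- the one the paper uses --- is an induction on $k$ via the recursive membership criterion recorded in the Remark after Definition~\ref{def:En}: an element $(x_1,\ldots,x_d)\tau$ lies in $E$ if and only if \emph{every} component $x_i$ lies in $E$ \emph{and} the level-two truncation lies in $\ker(\sgn_2)$. The base case $k=1$ is exactly your sign computation $\sgn_2(w_1)=-1$. For $k\ge 2$, the components of $w_k=(w_{k-1},\ldots,w_{k-1})$ all equal $w_{k-1}$, which is not in $E$ by the induction hypothesis; hence $w_k\notin E$, regardless of the value of $\sgn_2(w_k)$. This is the same component-wise obstruction you correctly identified for $U$; the point you missed is that it is also the only available obstruction for $E$ once $k\ge 3$.
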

\begin{proof}
The first part follows by induction on $k$ using the equality $w_k^2=(w_{k-1}^2,\ldots,w_{k-1}^2)$. We prove the second part by induction on $k$ as well. Let $k=1$ and let $g=(g_1, \ldots, g_d)\tau \in W$. We compute 
 \[
 w_1gw_1^{-1}=\sigma(g_1,\ldots, g_d)\tau\sigma^{-1}=(g_2, g_1,g_3,\ldots, g_d)\sigma\tau \sigma^{-1}. 
 \] 
 Assume $g$ is in $E$ (resp., in $U$), then each $g_i$ are in $E$ (resp., in $U$). Since the sign of a permutation is invariant under conjugation and $g_i \in E$ (rep., $\in U$) , we have $w_1gw_1^{-1} \in E$ (resp., $ \in U$). This proves that 
$w_1$ is in $N(E)$ (resp., in $N(U)$). Assume $w_{k-1}$ is in $N(E)$ (resp., in $N(U)$). Then 
\[ 
  \begin{aligned}
  w_k gw_k^{-1}=&(w_{k-1},\ldots ,w_{k-1})(g_1,\ldots, g_d)\tau(w_{k-1}^{-1},\ldots w_{k-1}^{-1} )\\
                        =&(w_{k-1}g_1w_{k-1}^{-1}, \ldots,w_{k-1} g_dw_{k-1}^{-1})\tau.   \\
  \end{aligned}
\]  
By the induction hypothesis $w_{k-1}g_iw_{k-1}^{-1}$ is in $E$ (resp., in $U$) for all $i$ and $\sgn_2( w_k gw_k^{-1})=\sgn_2(g)=1$, hence $w_k$ is in $N(E)$. Since $\sgn_2(w_1)=-1$ (resp., $\sgn(\sigma) \neq 1$ ), $w_1$ is not in $E$ (resp., not in $U$) and similarly $w_k \not\in E$ (resp., $\not\in U$) since $w_1\not\in E$ (resp., $\not\in U$).
\end{proof}

\begin{lemma}\label{lem:wcommute} 
We have $w_i  w_j =w_j w_i $ for all $i,j \geq 1$.
\end{lemma}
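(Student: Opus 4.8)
The plan is to prove the identity by a short induction, reducing the general case to the two structurally simplest situations. Without loss of generality assume $i \le j$; the case $i = j$ is trivial, so we may take $i < j$. I would organize the induction on $\min(i,j) = i$, treating the base case $i = 1$ separately and, in the inductive step, peeling one level off both automorphisms at once.

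For the base case $i = 1$, recall that $w_1 = \sigma$ while, since $j \ge 2$, the element $w_j = (w_{j-1}, \ldots, w_{j-1})$ lies in $W^d$ and has all $d$ of its components equal. The one genuinely useful observation is that a top-level permutation commutes with any element of $W^d$ whose components coincide: by the second relation in \eqref{eq:relations},
\[
\sigma (w_{j-1}, \ldots, w_{j-1}) = (w_{j-1}, \ldots, w_{j-1}) \sigma ,
\]
since permuting identical entries changes nothing. The left-hand side is $w_1 w_j$ and the right-hand side is $w_j w_1$, which settles this case.

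For the inductive step, suppose $2 \le i < j$ and that the claim holds for every pair whose smaller index is strictly less than $i$. Both $w_i = (w_{i-1}, \ldots, w_{i-1})$ and $w_j = (w_{j-1}, \ldots, w_{j-1})$ lie in $W^d$, so the first (componentwise) relation in \eqref{eq:relations} gives
\[
w_i w_j = (w_{i-1} w_{j-1}, \ldots, w_{i-1} w_{j-1}), \qquad w_j w_i = (w_{j-1} w_{i-1}, \ldots, w_{j-1} w_{i-1}).
\]
Applying the induction hypothesis to the pair $(i-1, j-1)$, whose smaller index is $i - 1 < i$ (and with $i-1 \ge 1$ as $i \ge 2$), yields $w_{i-1} w_{j-1} = w_{j-1} w_{i-1}$ in each component, so the two displayed elements agree.

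I do not expect a serious obstacle: the whole content is the single observation in the base case, that $\sigma$ commutes with an element of $W^d$ having equal components. Everything else is bookkeeping with the wreath-product relations \eqref{eq:relations}, and the induction terminates because each step lowers both indices by one until the smaller one reaches $1$.
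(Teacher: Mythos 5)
Your proof is correct and is essentially the paper's argument: both rest on the observation that $w_1=\sigma$ commutes with any diagonal element $(x,\ldots,x)\in W^d$, and both reduce the pair $(i,j)$ to $(i-1,j-1)$ by multiplying diagonal elements componentwise. The only cosmetic difference is that you organize the induction on $\min(i,j)$ while the paper inducts on $\max(i,j)$, and you spell out the base-case computation slightly more explicitly than the paper does.
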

\begin{proof}
We first observe that $w_1$ commutes with $w_i$ for any $i\geq 1$. This follows from the relations given in Equation~\eqref{eq:relations}. Assume $w_i$ and $w_j$ commute for all $i,j \leq n$. Let $i>1$.
Then \[ w_iw_{n+1}=(w_{i-1},\ldots,w_{i-1})(w_n,\ldots,w_n)= (w_{i-1}w_n,\ldots,w_{i-1}w_n). \]
By our assumption $w_{i-1}w_n=w_nw_{i-1}$ and hence $w_iw_{n+1}=w_{n+1}w_i$.

\end{proof}

\begin{defn}\label{def:isom}\hfill
\begin{enumerate}
\item Let $n\geq 2$. Define $ \varphi_n: \prod_{i=1}^{n-1} \mathbb{F}_2  \to  N(E)_n/E_n$ such that   
       \[ 
       \varphi((k_1,\ldots,k_{n-1}))=\prod_{i=1}^{n-1}{{w_i}_{|T_n}}^{k_i} \pmod {E_n}  
        \]         
\item Let $n\geq 1$. Define $\phi_n: \prod_{i=1}^{n} \mathbb{F}_2  \to  N(U)_n/U_n$ such that
 \[ \phi((k_1,\ldots,k_{n}))=\prod_{i=1}^{n}{{w_i}_{|T_n}}^{k_i} \pmod {U_n}.
  \]
\end{enumerate}
\end{defn}

\begin{lemma} The map $\varphi_n$ (resp., $\phi_n$) is a well defined homomorphism for any $n\geq 2$ (resp., for any $n\geq 1$).
\end{lemma}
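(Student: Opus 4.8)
The plan is to reduce the entire statement to the two preceding lemmas: that each $w_i$ has order $2$ (Lemma~\ref{lem:w}(1)), that the $w_i$ pairwise commute (Lemma~\ref{lem:wcommute}), and that each $w_i$ normalizes $E$, resp. $U$ (Lemma~\ref{lem:w}(2)). I would treat $\varphi_n$ in detail; the argument for $\phi_n$ is identical with $E$ replaced by $U$ and the index range extended to $n$.

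First I would check that the codomain is a genuine group and that the formula lands in it. Since $N(E)$ normalizes $E$ and $\pi_n\colon W\to W_n$ is a group homomorphism, $N(E)_n=\pi_n(N(E))$ normalizes $E_n=\pi_n(E)$, so $N(E)_n/E_n$ is a quotient group. By Lemma~\ref{lem:w}(2) each $w_i\in N(E)$, hence $w_i|_{T_n}=\pi_n(w_i)\in N(E)_n$; as $N(E)_n$ is a subgroup of $W_n$, every product $\prod_{i=1}^{n-1}(w_i|_{T_n})^{k_i}$ again lies in $N(E)_n$, so its class modulo $E_n$ is a well-defined element of $N(E)_n/E_n$.

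Next I would dispatch well-definedness of the exponents and the homomorphism property together. Because $w_i$ has order $2$ (Lemma~\ref{lem:w}(1)), so does $w_i|_{T_n}$, and therefore $(w_i|_{T_n})^{k_i}$ depends only on $k_i\in\F_2$ and not on the chosen integer lift, which gives well-definedness. For the homomorphism property, given tuples $(k_i)$ and $(k_i')$, Lemma~\ref{lem:wcommute} lets me rearrange
\[
\Bigl(\prod_{i=1}^{n-1}(w_i|_{T_n})^{k_i}\Bigr)\Bigl(\prod_{i=1}^{n-1}(w_i|_{T_n})^{k_i'}\Bigr)
=\prod_{i=1}^{n-1}(w_i|_{T_n})^{k_i+k_i'},
\]
where the exponents on the right are integer sums; reducing each exponent modulo $2$, again by order $2$, identifies this with $\prod_{i}(w_i|_{T_n})^{k_i+_{\F_2}k_i'}$. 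Passing to the quotient by $E_n$ yields $\varphi_n((k_i))\,\varphi_n((k_i'))=\varphi_n((k_i)+(k_i'))$. In effect, the commuting order-$2$ elements $w_1|_{T_n},\ldots,w_{n-1}|_{T_n}$ generate an elementary abelian $2$-subgroup of $N(E)_n$, and $\varphi_n$ is the composite of the canonical map $\prod_{i=1}^{n-1}\F_2\to\langle w_1|_{T_n},\ldots,w_{n-1}|_{T_n}\rangle$ with the quotient $N(E)_n\to N(E)_n/E_n$.

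There is no serious obstacle here: all the substantive content has already been isolated in Lemmas~\ref{lem:w} and \ref{lem:wcommute}, and what remains is the universal property of $\prod\F_2$ as the free elementary abelian $2$-group on the $w_i$. The only points needing a word of care are the normality of $E_n$ in $N(E)_n$, so that the codomain is a group, and the interpretation of the $\F_2$-valued exponents, both of which follow at once from the order-$2$ property. The same three inputs give $\phi_n$ verbatim.
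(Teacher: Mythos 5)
Your proposal is correct and follows essentially the same route as the paper: the paper's proof likewise deduces well-definedness from the order-$2$ property of the $w_i$ (Lemma~\ref{lem:w}) and the homomorphism property from their commutativity (Lemma~\ref{lem:wcommute}), with your version merely spelling out the additional routine checks (normality of $E_n$ in $N(E)_n$, compatibility of $\pi_n$ with these facts) that the paper leaves implicit. No gaps.
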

\begin{proof}
By Lemma~\ref{lem:w}(2), the order of $w_i$ is $2$ for any $i\geq 1$, hence the maps $\varphi_n$ and $\phi$ are well-defined. 
By Lemma~\ref{lem:wcommute}, they are both homomorphisms.
\end{proof}

\begin{proposition}
The homomorphism $\varphi_n$ (resp., $\phi_n$) is injective for any $n\geq 2$ (resp., for any $n\geq 1$).
\end{proposition}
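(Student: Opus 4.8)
The plan is to reduce injectivity to a statement about subsets of indices and then exploit that each $w_i$ acts by transpositions concentrated on a single level of $T$. Since $\varphi_n$ (resp.\ $\phi_n$) is a homomorphism of $\F_2$-vector spaces, it suffices to show its kernel is trivial: for a subset $S\subseteq\{1,\dots,n-1\}$ (resp.\ $S\subseteq\{1,\dots,n\}$), if $g_S:=\prod_{i\in S}{w_i}_{|T_n}\in E_n$ (resp.\ $\in U_n$) then $S=\emptyset$. For a vertex $v$ of level $\ell<n$, write $\tau_v(w)\in S_d$ for the permutation that $w\in W_n$ induces on the $d$ children of $v$ (the local action at $v$). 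From the definition of $w_i$ one sees by induction on $i$ that $\tau_v(w_i)=\sigma$ when $\ell=i-1$ and $\tau_v(w_i)=\id$ otherwise. Since by Lemma~\ref{lem:wcommute} the $w_i$ commute and have their nontrivial local actions on pairwise distinct levels, it follows that $\tau_v(g_S)=\sigma$ if $\ell(v)+1\in S$ and $\tau_v(g_S)=\id$ otherwise.

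For $\phi_n$ this finishes the argument immediately. The group $U_n$, being the $n$-fold iterated wreath product of $A_d$, is exactly the set of $w\in W_n$ with $\tau_v(w)\in A_d$ for every vertex $v$. If $S\neq\emptyset$, pick $i\in S$; then $\tau_v(g_S)=\sigma$ is an odd permutation at every level-$(i-1)$ vertex, so $g_S\notin U_n$. Hence $S=\emptyset$ and $\phi_n$ is injective.

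The case of $\varphi_n$ needs a level-wise description of $E_n$, which I expect to be the main obstacle. Unwinding the recursion $E_n=(E_{n-1}\wr E_1)\cap\ker(\sgn_2)$ together with the Remark following Definition~\ref{def:En}, I would prove by induction on $n$ that $w\in E_n$ if and only if, for every vertex $v$ of level $\ell$ with $0\le\ell\le n-2$,
\[
\sgn(\tau_v(w))\prod_{u}\sgn(\tau_u(w))=1,
\]
the product running over the $d$ children $u$ of $v$. Indeed, the condition $\sgn_2(\pi_2 w)=1$ is precisely this identity at the root, while the requirement that each component of $w$ lie in $E_{n-1}$ supplies the identities at the vertices of levels $1\le\ell\le n-2$, via the inductive hypothesis applied to the $d$ subtrees rooted at level one.

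Finally I would substitute $g_S$ into this criterion. Writing $\chi$ for the indicator function of $S$ (with $\chi(j)=0$ for $j\notin S$, in particular $\chi(n)=0$ in the $E$-case), the displayed quantity at a vertex of level $\ell$ equals $(-1)^{\chi(\ell+1)+d\,\chi(\ell+2)}$, because the $d$ children all sit on the same level and hence carry equal local signs. Thus $g_S\in E_n$ is equivalent to the system
\[
\chi(j)+d\,\chi(j+1)\equiv 0\pmod 2,\qquad j=1,\dots,n-1 .
\]
Reading this from $j=n-1$ downward gives $\chi(n-1)=\chi(n-1)+d\,\chi(n)\equiv 0$, and then inductively $\chi(j)\equiv\chi(j)+d\,\chi(j+1)\equiv 0$ once $\chi(j+1)=0$; hence $\chi\equiv 0$ and $S=\emptyset$. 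This forces $\ker\varphi_n=0$, and the computation is uniform in the parity of $d$. The one genuinely delicate point is the inductive identification of $E_n$ by these local-sign conditions; once it is in place, both injectivity statements follow from the explicit local actions of the $w_i$ recorded above.
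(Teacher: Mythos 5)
Your argument is correct, but it takes a genuinely different route from the paper's. The paper proves injectivity by induction on $n$: it peels off the outermost wreath layer, writing $\prod_i {w_i}^{k_i}$ as ${w_1}^{k_1}$ times a $d$-tuple of copies of $\prod_i {w_i}^{k_{i+1}}$ one level down, invokes the fact that the components of an element of $E_{n+1}$ lie in $E_n$ (via Proposition~\ref{prop:ncomp} and the remark after Definition~\ref{def:En}) so as to apply the induction hypothesis, and finally kills $k_1$ because $w_1\notin E_n$ (resp.\ $\notin U_n$). You instead work with the full portrait of an automorphism: you compute the local permutation $\tau_v(w_i)$ at every vertex, characterize $U_n$ (resp.\ $E_n$) by conditions at every vertex --- local action in $A_d$ for $U_n$, and the sign identity $\sgn(\tau_v)\prod_u\sgn(\tau_u)=1$ at every vertex of level at most $n-2$ for $E_n$ --- and then reduce the kernel computation to a triangular linear system over $\F_2$, solved from the bottom level up. Both are valid; yours costs an extra lemma (the vertex-wise description of $E_n$, which you rightly flag as the delicate point, but which does follow by a routine induction from $E_n=(E_{n-1}\wr E_1)\cap\ker(\sgn_2)$) and buys an explicit, non-recursive description of $E_n$ and of the cosets $w_iE_n$ that makes the independence of the $w_i$ completely transparent and is uniform in the parity of $d$. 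One small justification to tighten: the identity $\tau_v(g_S)=\prod_{i\in S}\tau_v(w_i)$ does not follow from commutativity alone, since in general $\tau_v(wz)=\tau_{z(v)}(w)\,\tau_v(z)$ with the first factor evaluated at the moved vertex $z(v)$; what makes your computation work is that $\tau_v(w_i)$ depends only on the level of $v$ (immediate from the recursion $w_i=(w_{i-1},\ldots,w_{i-1})$), so the base point of each local action is irrelevant.
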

\begin{proof}
 We will do induction on $n$ to show that $\varphi_n$ is injective. If ${w_1}^{k_1}_{|T_2} =\id \pmod {E_2}$, then  ${w_1}_{|T_2}^{k_1} $ is in $E_2$. Since $\sgn_2(w_1)=-1$, we have $k_1=0$ and hence $\varphi_2$ is injective. Similarly $\phi_1$ is injective since ${w_1}_{|T_2} $ is not in $U_1=A_d$. 
 
Assume that $\varphi_{n}$ (resp., $\phi_n$) is injective for some $n\geq 2$ (resp., $n\geq 1$). 
Let $\varphi_{n+1}(k_1,\ldots,k_{n})=\prod_{i=1}^{n}{w_{i}}_{|T_{n+1}}^{k_i}=\id \pmod {E_{n+1}}$.  We compute that
 \[ \begin{aligned}
    \prod_{i=1}^{n}{w_i}_{|T_{n+1}}^{k_i}&={w_{1}}_{|T_{n+1}}^{k_1}\prod_{i=2}^{n}({w_{i-1}}_{|T_{n}}^{k_i}, \ldots, {w_{i-1}}_{|T_{n}}^{k_i} ) \\
                                                       &={w_{1}}_{|T_{n+1}}^{k_1}(\prod_{i=1}^{n-1}{w_{i}}_{|T_{n}}^{k_{i+1}}, \ldots, \prod_{i=1}^{n-1}{w_{i}}_{|T_{n}}^{k_{i+1}} ) \in E_{n+1}
  \end{aligned}  
\]
By Proposition~\ref{prop:ncomp}, $\prod_{i=1}^{n-1}{w_{i}}_{|T_{n}}^{k_{i+1}}$ is in $E_{n}$. By the induction hypothesis $k_{i+1}=0$ for all $i=1,\ldots,n-2$ and $\varphi_{n+1}(k_1,\ldots,k_{n})={w_{1}}_{|T_{n+1}}^{k_1}$. 
We are left to show that $k_1=0$. This follows since ${w_{1}}_{|T_{n}}$ is not in $E_n$. Since Proposition~\ref{prop:ncomp} holds for both $E$ and $U$, the proof for the injectivity of $\phi_n$ follows the same argument.
\end{proof}

\begin{cor}\label{cor:injective} 
 Let $k_j\in\F_2$ for $1\leq j \leq n$. If the product $\prod_{j=1}^{n-1}{w_j}^{k_j}$ (resp., $\prod_{j=1}^{n}{w_j}^{k_j}$) is in $E_n$ (resp., in $U_n$), then $k_j=0$ for all $j$.
\end{cor}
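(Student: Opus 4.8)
The plan is to recognize that this corollary is simply a restatement of the injectivity established in the preceding proposition, once the notation is unwound. The only point that needs care is the meaning of the phrase ``$\prod_{j=1}^{n-1} w_j^{k_j}$ is in $E_n$'': since each $w_j$ is an element of $W$ while $E_n \subseteq W_n$, this must be read as the assertion that the image of $\prod_{j=1}^{n-1} w_j^{k_j}$ under the level-$n$ projection $\pi_n \colon W \to W_n$ lies in $E_n$.

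First I would use that $\pi_n$ is a group homomorphism, so that
\[
\pi_n\!\left( \prod_{j=1}^{n-1} w_j^{k_j} \right) = \prod_{j=1}^{n-1} {w_j}_{|T_n}^{k_j}.
\]
Thus the hypothesis says precisely that $\prod_{j=1}^{n-1} {w_j}_{|T_n}^{k_j} \in E_n$, i.e.\ that $\varphi_n(k_1,\ldots,k_{n-1})$ is the identity coset in $N(E)_n/E_n$. By the preceding proposition $\varphi_n$ is injective, so its kernel is trivial, and therefore $k_j = 0$ for all $j$.

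The argument for $U_n$ is identical: the hypothesis that $\prod_{j=1}^{n} w_j^{k_j}$ restricts into $U_n$ says that $\phi_n(k_1,\ldots,k_n)$ is the identity coset in $N(U)_n/U_n$, and injectivity of $\phi_n$ forces every $k_j$ to vanish. I expect no genuine obstacle here, since all of the mathematical content—both the computation in Lemma~\ref{lem:w} that each $w_j$ normalizes $E$ (resp.\ $U$) but does not lie in it, and the inductive proof that the corresponding cosets are independent—has already been carried out in establishing injectivity. The only thing to get right is the bookkeeping that identifies the displayed product with an evaluation of $\varphi_n$ (resp.\ $\phi_n$), matching the index ranges $1 \le j \le n-1$ and $1 \le j \le n$ in the two cases.
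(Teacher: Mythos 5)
Your proposal is correct and matches the paper's (implicit) argument: the paper states this as an immediate corollary of the injectivity of $\varphi_n$ and $\phi_n$, with no further proof, and your unwinding of the notation via $\pi_n$ and the identification of the hypothesis with $\varphi_n(k_1,\ldots,k_{n-1})$ (resp.\ $\phi_n(k_1,\ldots,k_n)$) being the trivial coset is exactly the intended reading.
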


\begin{proposition}
 Let $n\geq 2$. The homomorphism $\varphi_n$ (resp., $\phi_n$) is surjective for any $n\geq 2$ (resp., for any $n\geq 1$).
 \end{proposition}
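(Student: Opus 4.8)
The plan is to induct on $n$ and exploit the wreath recursion together with Proposition~\ref{prop:ncomp}. Since the previous proposition (Corollary~\ref{cor:injective}) already gives injectivity, I only need to produce, for each $x\in N(E)_n$ (resp.\ $N(U)_n$), an expression of its class modulo $E_n$ (resp.\ $U_n$) as a product of the $w_i$. Throughout I will use that $E_{n+1}$ is normal in $N(E)_{n+1}$ (and $U_{n+1}$ in $N(U)_{n+1}$), so that left and right cosets agree and the quotient is a group. For the base cases I would argue by counting a single coset: for $E$ take $n=2$, where $E_2=W_2\cap\ker(\sgn_2)$ has index $2$ in $W_2$ and ${w_1}_{|T_2}\notin E_2$ because $\sgn_2(w_1)=-1$, so $W_2=E_2\sqcup {w_1}_{|T_2}E_2$ and $\varphi_2$ is onto; for $U$ take $n=1$, where $U_1=A_d$ has index $2$ in $W_1=S_d$ and ${w_1}_{|T_1}=\sigma$ is odd, giving $\phi_1$ onto by the same two-coset argument.

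For the inductive step for $E$, let $x=(x_1,\ldots,x_d)\tau\in N(E)_{n+1}$. First I would remove the top permutation: since $E_n$ surjects onto $S_d$ at level one, I can choose $e\in E_{n+1}$ with top permutation $\tau^{-1}$ (balancing the level-two sign with a single component), so that $xe\in N(E)_{n+1}\cap W_n^d$ has trivial top permutation and $xe\equiv x\pmod{E_{n+1}}$. Writing $xe=(x_1',\ldots,x_d')$, Proposition~\ref{prop:ncomp} and its Remark give $x_i'\in N(E)_n$ and $x_i'(x_1')^{-1}\in E_n$, so all $x_i'$ lie in a single coset of $E_n$; by the inductive surjectivity of $\varphi_n$ that coset is $(\prod_{l=1}^{n-1}w_l^{k_l})E_n$ for some $k_l\in\mathbb{F}_2$. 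Hence $x_i'=(\prod_l w_l^{k_l})e_i'$ with $e_i'\in E_n$, and using $(w_l,\ldots,w_l)=w_{l+1}$ I get
\[
xe=\Bigl(\prod_{l=1}^{n-1}w_l^{k_l},\ldots,\prod_{l=1}^{n-1}w_l^{k_l}\Bigr)(e_1',\ldots,e_d')=\Bigl(\prod_{l=1}^{n-1}w_{l+1}^{k_l}\Bigr)(e_1',\ldots,e_d').
\]
The diagonal factor is already a product of $w_2,\ldots,w_n$. The residual factor $(e_1',\ldots,e_d')$ lies in $N(E)_{n+1}$, and its class mod $E_{n+1}$ is governed entirely by $s:=\sgn_2((e_1',\ldots,e_d'))=\prod_i\sgn(\pi_1 e_i')$: if $s=1$ it lies in $E_{n+1}$, while if $s=-1$ then $w_1(e_1',\ldots,e_d')\in E_{n+1}$ since $\sgn_2(w_1)=-1$ and $\sgn_2$ is multiplicative. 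Thus $(e_1',\ldots,e_d')\equiv w_1^{\epsilon}\pmod{E_{n+1}}$, and because the $w_i$ commute (Lemma~\ref{lem:wcommute}) I conclude $x\equiv w_1^{\epsilon}\prod_{m=2}^{n}w_m^{k_{m-1}}$, which is in the image of $\varphi_{n+1}$.

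The inductive step for $U$ is the same in outline, with one change forced by the fact that $U_{n+1}$ contains only elements with even top permutation. So I cannot always cancel $\tau$ inside $U_{n+1}$. Instead, if $\tau$ is odd I first replace $x$ by $w_1x$, whose top permutation $\sigma\tau$ is even, and then cancel the (now even) top permutation by an element of $U_{n+1}$. After that the diagonal/induction step proceeds as above, except that the residual factor $(u_1,\ldots,u_d)$ with $u_i\in U_n$ and trivial top permutation automatically lies in $U_{n+1}$ (no $\sgn_2$ condition intervenes, only the evenness of the top permutation). Commuting the extra $w_1$ back through the diagonal factor via Lemma~\ref{lem:wcommute} yields the desired expression in $w_1,\ldots,w_{n+1}$, so $\phi_{n+1}$ is onto.

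The step I expect to require the most care is the simultaneous, yet independent, correction of the two defects of a normalizing element: its top permutation and its product-sign $\sgn_2$. The first defect is absorbed either by an element of $E_{n+1}$ (resp.\ $U_{n+1}$) or, in the odd case for $U$, by $w_1$; the second is absorbed by the single generator $w_1$ using multiplicativity of $\sgn_2$. The key bookkeeping identity is that the diagonal assembly $(\prod_l w_l^{k_l},\ldots,\prod_l w_l^{k_l})$ collapses to $\prod_l w_{l+1}^{k_l}$, so that the inductive hypothesis feeds exactly the generators $w_2,\ldots,w_{n+1}$ while $w_1$ supplies the one remaining degree of freedom. This is precisely what makes the orders $2^{n-1}$ and $2^{n}$ match the injectivity already established, forcing $\varphi_n$ and $\phi_n$ to be isomorphisms.
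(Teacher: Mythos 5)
Your proof is correct and follows essentially the same route as the paper's: the same index-two counting for the base cases, and the same inductive step of stripping the top permutation, applying Proposition~\ref{prop:ncomp} to the components, collapsing the diagonal via $(w_l,\ldots,w_l)=w_{l+1}$, and absorbing the residual $\sgn_2$ (resp.\ parity) defect into $w_1$. The only differences are cosmetic orderings — you invoke Proposition~\ref{prop:ncomp}(2) before the inductive hypothesis to place all components in one coset, where the paper applies the hypothesis componentwise and then equates exponents via Corollary~\ref{cor:injective}.
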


\begin{proof} 
The surjectivity of $\varphi_{2}$ follows from the fact that $[W_2:E_2]=2$ and that $\varphi_2$ is injective. Similarly $[W_1:U_1]=2$ and $\phi_1$ is injective implies that $\phi_1$ is surjective. 

Assume $\varphi_{n}$ is surjective. Let $x=(x_1,\ldots,x_d)\tau$ be in $N(E)$ (resp. $N(U)$). If $\tau_{|T_1}$ is an even permutation, then $\tau$ is in $E$. Otherwise $\tau w^{-1}_{|T_1}$ is an even permutation and again $\tau w^{-1}$ is in $E$. A similar argument works for $U$. 
Hence $\tau={w_1}^k \pmod {E}$ (resp., $\pmod U$) for some $k \in \F_2$. 
So we may assume $x=(x_1,\ldots,x_d)$. By Proposition~\ref{prop:ncomp}, ${x_i}_{|T_n}$ is in $N(E)_{n}$ for all $1\leq i \leq d$. Since $\varphi_n$ is surjective, we have ${x_i}_{|T_n}= \prod_{j=1}^{n-1}{w_j}_{|T_{n}}^{k_{i,j}}g_i$ for some $k_{i,j} \in \F_2$ and $g_i \in E_{n}$ for all $1\leq i \leq d$. Hence ${x}_{|T_{n+1}}$ is
\[ \begin{aligned}
 {x}_{|T_{n+1}}=&(\prod_{j=1}^{n-1}{w_j}_{|T_{n}}^{k_{1,j}},\ldots,\prod_{j=1}^{n-1}{w_j}_{|T_{n}}^{k_{d,j}})(g_1,\ldots,g_d)  \\
 =& (\prod_{j=1}^{n-1}{w_j}_{|T_{n}}^{k_{1,j}},\ldots,\prod_{j=1}^{n-1}{w_j}_{|T_{n}}^{k_{d,j}}){w_1}_{|T_{n+1}}^{k} \pmod {E_{n+1}}
\end{aligned}
\]
for some $k \in \F_2$. By Lemma~\ref{lem:w}, ${w_1}_{|T_{n+1}}^{k} \in N(E)_{n+1}$ and hence  $ (\prod_{j=1}^{n-1}{w_j}_{|T_{n}}^{k_{1,j}},\ldots,\prod_{j=1}^{n-1}{w_j}_{|T_{n}}^{k_{d,j}}) \in N(E)_{n+1}$ and
by Proposition~\ref{prop:ncomp}(2), we find that
\[
\prod_{j=1}^{n-1}{w_j}_{|T_{n}}^{k_{1,j}-k_{i,j}}
\]
is in $E_{n}$ for all $i$. Hence by Corollary~\ref{cor:injective}, $k_{1,j}=k_{2,j}=\ldots=k_{d,j}$ for all $1\leq j \leq n-1$. Let this number be $k_j$ for each $j=1,\ldots,n-1$. Now we have
\[ 
\begin{aligned}
   x_{|T_{n+1}}=&(\prod_{j=1}^{n-1}{w_j}_{|T_{n}}^{k_{j}}, \ldots, \prod_{j=1}^{n-1}{w_j}_{|T_{n}}^{k_{j}}){w_1}_{|T_{n+1}}^{k} & \pmod {E_{n+1}}\\
                           =&\prod_{j=1}^{n-1}({w_j}_{|T_{n}}^{k_{j}},\ldots, {w_j}_{|T_{n}}^{k_{j}}){w_1}_{|T_{n+1}}^{k} & \pmod {E_{n+1}} \\
                            =& {w_1}_{|T_{n+1}}^{k} \prod_{j=1}^{n-1}{w_{j+1}}_{|T_{n+1}}^{k_{j}}& \pmod {E_{n+1}}
\end{aligned}
\]
Hence $\varphi_n$ is surjective. The proof is same for $\phi_n$ since Proposition~\ref{prop:ncomp}  and Lemma~\ref{lem:w} holds for $U$ as well.
\end{proof}

\begin{cor}\label{cor:isom} 
Let $N(E)$ denote the normalizer of $E$ in $W$ and let $N(U)$ denote the normalizer of $U$ in $W$. Then 
\[ \prod_{i=1}^{\infty} \mathbb{F}_2  \to  N(E)/E \text{ and } \prod_{i}^{\infty} \mathbb{F}_2  \to  N(U)/U\] 
given by $(k_1,\ldots,k_i,\ldots) \mapsto w_1^{k_1} w_2^{k_2}\ldots w_n^{k_n}\ldots$
is an isomorphism.

\end{cor}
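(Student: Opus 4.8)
The plan is to realize the map---call it $\Psi$---as the inverse limit of the finite-level isomorphisms $\varphi_n$ and $\phi_n$ constructed above. First I would record that $\Psi$ is a well-defined continuous homomorphism of profinite groups: the factors $w_i$ pairwise commute and have order $2$ (Lemmas~\ref{lem:wcommute} and~\ref{lem:w}), and since ${w_i}_{|T_n}=\id$ for $i>n$, the infinite product $\prod_i w_i^{k_i}$ converges in $W=\varprojlim_n W_n$ and lands in $N(G)$ by Lemma~\ref{lem:w}(2). Next I would identify both sides as inverse limits: because $G$ and $N(G)$ are closed in $W$ and the projections $N(G)_{n+1}\to N(G)_n$ are surjective, one has $N(G)/G\cong\varprojlim_n N(G)_n/G_n$; and $\prod_{i=1}^\infty\F_2\cong\varprojlim_n\prod_{i=1}^{n}\F_2$ (resp. $\varprojlim_n\prod_{i=1}^{n-1}\F_2$) under truncation of coordinates. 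It then suffices to check that $\Psi$ is carried level by level to the $\phi_n$ (resp. $\varphi_n$), so that $\Psi=\varprojlim\phi_n$ (resp. $\varprojlim\varphi_n$).

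For $U$ this is immediate. Projecting $\phi_{n+1}(k_1,\dots,k_{n+1})$ to level $n$ gives $\prod_{i=1}^{n+1}{w_i}_{|T_n}^{k_i}=\prod_{i=1}^{n}{w_i}_{|T_n}^{k_i}=\phi_n(k_1,\dots,k_n)$, since ${w_{n+1}}_{|T_n}=\id$. Hence $\{\phi_n\}$ is a compatible family of isomorphisms with respect to the truncation maps, and $\varprojlim\phi_n=\Psi$ is an isomorphism for $U$.

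For $E$ the bookkeeping is more delicate, and this is where I expect the main obstacle to lie. Here $\varphi_n$ uses only $w_1,\dots,w_{n-1}$, yet ${w_n}_{|T_n}\neq\id$, so the projection of $\varphi_{n+1}$ to level $n$ is \emph{not} literally $\varphi_n$ precomposed with truncation: there is an index shift, already visible in the surjectivity proof, where a level-$(n+1)$ element is expressed as ${w_1}^{k}\prod_j w_{j+1}^{k_j}$. I would therefore verify the injectivity and surjectivity of $\Psi$ directly rather than appeal to a naive commuting square. For injectivity, assuming $\prod_i w_i^{k_i}\in E$, I would restrict to each $T_n$ and apply Corollary~\ref{cor:injective}, using the self-similar decomposition $\prod_{i\ge2}w_i^{k_i}=(v,\dots,v)$ with $v=\prod_j w_j^{k_{j+1}}$ together with the characterization $E\simeq(E\wr S_d)\cap\ker(\sgn_2)$ from the Remark, to peel off one coordinate at a time. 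For surjectivity, given $x\in N(E)$, the finite surjectivity proposition pins down the coefficient of $w_1$ at level $2$, of $w_2$ at level $3$, and so on; these determinations are consistent across levels (again via Proposition~\ref{prop:ncomp}(2) and Corollary~\ref{cor:injective}) and assemble into a single sequence $(k_i)$ with $x\equiv\prod_i w_i^{k_i}\pmod E$.

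The crux is thus the $E$-case matching: one must show that the finite isomorphisms $\varphi_n$ glue to $\Psi$ despite $\varphi_n$ and $\varphi_{n+1}$ not sharing the transparent truncation relationship enjoyed by the $\phi_n$, equivalently that Corollary~\ref{cor:injective} can be bootstrapped to control the single extra generator ${w_n}_{|T_n}$ lying beyond its stated range at each level. Once the inverse-limit identification is in place for $U$ and this shift is handled for $E$, the isomorphism in the statement follows.
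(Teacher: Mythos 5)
Your reduction of the $U$-case to compatibility of the $\phi_n$ under truncation is correct, and you have put your finger on exactly the right spot in the $E$-case: the single extra generator ${w_n}_{|T_n}$ lying one index beyond the range of Corollary~\ref{cor:injective}. But you leave that crux unresolved, and it cannot be resolved in the way you hope. Carry out the peeling you propose: write $\prod_{i\geq 1}w_i^{k_i}=w_1^{k_1}(v,\dots,v)$ with $v=\prod_{j\geq 1}w_j^{k_{j+1}}$, and apply $E\simeq(E\wr S_d)\cap\ker(\sgn_2)$. The coordinate condition only reproduces the same problem for the shifted sequence, and the sign condition reads $\sgn_2\bigl((w_1^{k_1}(v,\dots,v))_{|T_2}\bigr)=(-1)^{k_1}\cdot(-1)^{d k_2}=1$, i.e.\ $k_1\equiv d\,k_2 \pmod 2$. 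When $d$ is even this forces $k_1=0$ and the recursion kills every $k_i$, so injectivity holds. When $d$ is odd it only forces $k_1=k_2$, and iterating gives $k_1=k_2=k_3=\cdots$, not $k_i=0$.

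This is not a removable difficulty: for $d$ odd the element $v=\prod_{i\geq 1}w_i$ genuinely lies in $E$. Indeed $v=\sigma(v,\dots,v)=(v,\dots,v)\sigma$, so $v_{|T_1}=\sigma\in E_1=S_d$, and by induction on $n$ the restriction $v_{|T_n}=(v_{|T_{n-1}},\dots,v_{|T_{n-1}})\sigma$ lies in $E_n$ because all coordinates lie in $E_{n-1}$ and $\sgn_2(v_{|T_2})=\sgn(\sigma)\cdot\sgn(\sigma)^{d}=(-1)^{d+1}=1$. Hence $(1,1,1,\dots)$ lies in the kernel of your map for $G=E$ with $d$ odd, and the stated map is surjective but not injective there; the obstruction is precisely that $\prod_{j=1}^{n}w_j^{k_j}\in E_n$ only forces $k_1=\dots=k_n$, which is weaker than Corollary~\ref{cor:injective}. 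This does not contradict the finite-level propositions (no \emph{finite} product $\prod_{i=1}^{N}w_i$ lies in $E$; only the infinite, self-similar product does), and it does not affect Theorem~\ref{thm:main}, whose proof only needs the conclusion $k_1=k_2=\cdots$; the $U$-case is also safe, since there $k_1=0$ is already forced at level one by $\sigma\notin A_d$. But your proposal defers exactly the step at which the claimed isomorphism fails, so as written it does not establish the statement, and no bootstrapping of Corollary~\ref{cor:injective} will: for $d$ odd one must either restrict to $d$ even or replace the source by the quotient of $\prod_{i=1}^{\infty}\F_2$ by the constant sequence.
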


\section{Arithmetic Monodromy Groups of Dynamical Belyi Maps}\label{sec:6}
In this section $f$ denotes a dynamical Belyi map of combinatorial type $\underline{C}=(d; e_1, e_2,
e_3)\notin\{(4;3,3,3), (6;4,4,5)\}$. To ease the notation we will drop $f$ from the notation and denote $ G^{\text{geom}}(f)$ by $G$. Remember that $G$ is either $E$ or $U$ (Definition~\ref{def:En}) by Theorem~\ref{thm:bek}.Now we are ready to prove the main theorem. 

\begin{theorem} \label{thm:main}
Let $f$ be a dynamical Belyi map of combinatorial type $\underline{C}=(d; e_1, e_2,
e_3) \\ \notin\{(4;3,3,3), (6;4,4,5)\}$. The order of the quotient $G^{\text{arith}}(f)/G^{\text{geom}}(f)$ divides $2$. 
\end{theorem}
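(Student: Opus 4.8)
The plan is to realize $Q:=G^{\text{arith}}(f)/G^{\text{geom}}(f)$ as a subgroup of the explicit elementary abelian group computed in Corollary~\ref{cor:isom}, and then to use the self-similarity of the monodromy representation to pin this subgroup down to at most one nonzero element. Write $G=G^{\text{geom}}(f)$ and $\Gamma=G^{\text{arith}}(f)$; by Theorem~\ref{thm:bek} we have $G\in\{E,U\}$. The exact sequence \eqref{eq:exact} shows $G\trianglelefteq\Gamma$, so $\Gamma$ normalizes $G$ inside $W$; hence $\Gamma\subseteq N(G)$ and $Q\hookrightarrow N(G)/G$. By Corollary~\ref{cor:isom} the latter is $\prod_{i\ge1}\F_2$, with the class of $w_i$ corresponding to the $i$-th coordinate, so I may regard $Q$ as a closed subgroup of $\prod_{i\ge1}\F_2$. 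My target is to show $Q\subseteq\{0,\delta\}$, where $\delta$ is the class of the diagonal element $\prod_{i\ge1}w_i=(1,1,1,\dots)$, which lies in $N(G)$ by Lemma~\ref{lem:w}.

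The engine is the left column of Diagram~\eqref{diagram:Gar}, which yields that $pr_1\colon \Gamma\cap W^d\to\Gamma$ is surjective (self-similarity of $\Gamma$). First I would combine this with Proposition~\ref{prop:ncomp}. Take $h=(x_1,\dots,x_d)\in\Gamma\cap W^d$. By Proposition~\ref{prop:ncomp}(2) all the $x_i$ lie in $N(G)$ and share a common class $\bar x=(a_1,a_2,\dots)\in N(G)/G$. Using the recursion $w_{i+1}=(w_i,\dots,w_i)$, I would compute $(u,\dots,u)=\prod_{i}w_{i+1}^{a_i}$ for a representative $u$ of $\bar x$, which identifies the class of $h$ with the right shift $(0,a_1,a_2,\dots)$ of $\bar x$, up to a correction in the $\langle w_1\rangle$-direction in the $E$-case coming from $\sgn_2$ (for $U$ there is no correction, since $(\gamma_1,\dots,\gamma_d)\in U$ as soon as each $\gamma_i\in U$). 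Read backwards, the class of $pr_1(h)$ is the left shift of the class of $h$. Since $pr_1$ is onto $\Gamma$, as $h$ ranges over $\Gamma\cap W^d$ the class of $pr_1(h)$ ranges over all of $Q$; this makes the left shift a surjective operator on $Q$ and, dually, shows the (corrected) right shift carries $Q$ into $Q$.

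Next I would exploit the two shift relations together. Viewing $\prod_{i\ge1}\F_2$ as $\F_2[[t]]$, the right-shift-into-$Q$ relation forces $Q$ to be stable under multiplication by $t$, and the only such closed subgroups are $0$ and the ideals $(t^k)$, all infinite except $0$. The surjectivity of the left shift $Q\cap(t)\twoheadrightarrow Q$ (every class with vanishing first coordinate is realized by some $h\in\Gamma\cap W^d$, because $G_1\supseteq A_d$ lets me trivialize the level-one permutation) then eliminates every $(t^k)$ with $k\ge1$. In the $E$-case the $\sgn_2$-correction twists the right shift so that its unique nonzero finite orbit is the diagonal $\delta$; together with the base computation $Q_2\hookrightarrow N(E)_2/E_2\cong\F_2$ this gives $Q\subseteq\{0,\delta\}$. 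In the $U$-case the untwisted right shift has no nonzero finite orbit, so I must instead feed in the level-one input $G^{\text{arith}}_1=A_d=G^{\text{geom}}_1$ (equivalently, that the relevant discriminant is a square), which kills the first coordinate and forces $Q=0$. Concretely I would run this as an induction on $n$, proving $\lvert Q_n\rvert=\lvert\pi_n(\Gamma)/G_n\rvert\le2$ and then passing to the inverse limit.

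The main obstacle I expect is exactly this inductive step: controlling the interaction between the two surjections $Q_n\twoheadrightarrow Q_{n-1}$ coming from the shift (kernel in the $w_1$-direction) and from truncation (kernel in the $w_n$-direction). A priori $\lvert Q_n\rvert$ could pick up a factor of $2$ at each level, so I must show these two kernels never contribute independently — equivalently, that any nonzero class is the truncation of the diagonal $w_1w_2\cdots w_{n-1}$ rather than of a lower $w_i$ — so that $\lvert Q_n\rvert$ stays equal to the base value $\le2$. Pinning down the precise $\sgn_2$-correction term in the $E$-case, and establishing the level-one equality $G^{\text{arith}}_1=A_d$ in the all-odd $U$-case, are the two concrete computations on which the whole argument turns.
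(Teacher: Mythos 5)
Your overall frame --- embedding $Q=G^{\text{arith}}/G^{\text{geom}}$ into $N(G)/G\cong\prod_{i\ge 1}\F_2$ via Corollary~\ref{cor:isom} and using $w_{i+1}=(w_i,\ldots,w_i)$ to turn self-similarity into a shift operator --- matches the paper, but the engine you build on it has genuine gaps. First, the surjectivity of $pr_1\colon G^{\text{arith}}\cap W^d\to G^{\text{arith}}$ does not follow from Diagram~\eqref{diagram:Gar}: the monodromy construction shows that the stabilizer of the single vertex $x_1$ restricts onto the full group, whereas $G^{\text{arith}}\cap W^d$ is the stabilizer of the entire first level, and $F_k$ only surjects onto the $\rho\circ f^1_*$-preimage of that smaller subgroup. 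In fact the claim fails exactly where you need it: if $G=U$ and $|Q|=2$ with generator $\prod_i w_i=(\prod_i w_i,\ldots,\prod_i w_i)\sigma$, then no coset representative outside $U$ lies in $W^d$ (its level-one part is the odd permutation $\sigma$, while $U_1=A_d$), so $G^{\text{arith}}\cap W^d=U\cap W^d$ and $pr_1$ of it is $U\subsetneq G^{\text{arith}}$. Relatedly, your $U$-case endgame rests on $G_1^{\text{arith}}=A_d$, which is not true in general --- it holds precisely when the relevant discriminant is a square in $k$, and the theorem is stated so as to allow $|Q|=2$ in the all-odd case --- so an argument that ``forces $Q=0$'' there proves too much.

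Second, even granting both shift properties, subgroup-level stability is too weak: $Q=\prod_{i\ge1}\F_2$ itself is closed, stable under the (corrected) right shift, and satisfies $L(Q\cap(t))=Q$, so your ideal-theoretic dichotomy does not exclude it. The missing idea is that the paper extracts an \emph{element-wise} statement from the diagrams: the surjection $\Gal(\bar{k}/k)\twoheadrightarrow Q$ coming from \eqref{eq:exact} also factors through the diagonal $F_k/F_{\bar{k}}\cong\Gal(\bar{k}/k)$, and the two resulting routes $\Gal(\bar{k}/k)\to Q\hookrightarrow\prod_{i\ge1}\F_2$ are the \emph{same} map, one computing a class $w=(k_1,k_2,\ldots)$ and the other its left shift $(k_2,k_3,\ldots)$. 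Hence every individual element of $Q$ is fixed by the shift, i.e.\ is a constant sequence, giving $Q\subseteq\{0,\delta\}$ directly. This pointwise fixed-point argument sidesteps both the surjectivity of $pr_1$ on $G^{\text{arith}}\cap W^d$ and any level-one discriminant input, and it treats the $E$ and $U$ cases uniformly; without it, your two shift relations alone do not close the argument.
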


\begin{proof}
The idea of the proof comes from \cite[Lemma~4.8.4]{Pinkrational}. 
Using ~\eqref{eq:exact},  we have 
\[ 
\pi_1^{\acute{e}t}(\PP_k^1 \backslash P, x_0)/\pi_1^{\acute{e}t}(\PP_{\bar{k}}^1 \backslash P, x_0) \to G^{\text{arith}}/ G. 
\] 
and that
$\pi_1^{\acute{e}t}(\PP_k^1 \backslash P, x_0)/\pi_1^{\acute{e}t}(\PP_{\bar{k}}^1 \backslash P, x_0)$ is isomorphic to $\Gal(\bar{k}/k)$. Using \eqref{diagram:Gar} and ~\eqref{diagram:Ggeom}, we obtain the following diagram.
\begin{equation}\label{eq:quo}
   \begin{tikzcd}
\pi_1^{\acute{e}t}(\PP_k^1 \backslash P, x_0)/\pi_1^{\acute{e}t}(\PP_{\bar{k}}^1 \backslash P, x_0)      \arrow{r}                      & G^{\text{arith}}/ G                                                 \\
F_k/F_{\bar{k}} \arrow{r}   \arrow{u} \arrow{d}      & G^{\text{arith}} \cap W^d / G \cap W^d \arrow{d}  \arrow[hookrightarrow]{u} \\
 \pi_1^{\acute{e}t}(\PP_k^1 \backslash P, x_0)/\pi_1^{\acute{e}t}(\PP_{\bar{k}}^1 \backslash P, x_0)  \arrow{r}                                      &   G^{\text{arith}}/G   \\
     \end{tikzcd}
  \end{equation}
The quotient $F_k/F_{\bar{k}}$ is $ \{(\gamma_1 ,\ldots,\gamma_d) \in \prod_{i=1}^d \Gal(\bar{k}/k) :\gamma_1=\ldots=\gamma_d\}$. Putting all of these discussions together, we obtain the diagram below.
  \begin{equation}\label{eq:quo}
   \begin{tikzcd}
\Gal(\bar{k}/ k) \arrow{r}                      & G^{\text{arith}}/ G                                                 \\
\Gal(\bar{k}/ k) \arrow{r}   \ar[equal]{u}  \ar[equal]{d}     & G^{\text{arith}} \cap W^d / G \cap W^d \arrow{d}  \arrow[hookrightarrow]{u} \\
  \Gal(\bar{k}/ k)   \arrow{r}                                      &   G^{\text{arith}}/G   \\
     \end{tikzcd}
  \end{equation}
Since $G$ is a normal subgroup of $G^{\text{arith}}$, the quotient $G^{\text{arith}}/G$ is a subgroup of $N(G)/ G$ which we studied in detail in Corollary~\ref{cor:isom}. We first note that by Corollary~\ref{cor:isom}, $N(G)/ G$ is isomorphic to a direct product of copies of $\F_2$ and hence the exact sequence 
\[
1 \to N(G)\cap W^d / G \cap W^d \to N(G)/G \to \langle {w_1}_{|T_1}\rangle \to 1.
\]
splits with trivial action. Hence 
\[  
N(G)/G \simeq (N(G)\cap W^d / G \cap W^d) \times \langle {w_1}_{|T_1}\rangle.
\]
We note that $w_i$ is in $N(G)\cap W^d$ for all $i\geq 2$. Therefore we have a projection map 
\[ 
N(G)/G \to N(G)\cap W^d / G \cap W^d
 \] sending the automorphism $\prod_{i=1}^{\infty}w_i^{k_i} \pmod {G} $ to $\prod_{i=2}^{\infty}w_i^{k_{i}}  \pmod{G}$. Furthermore we can compose it with the projection of $W^d$ onto the first component. The composition of these two maps gives a homomorphism 
$ N(G)/G \to N(G)/ G$ that maps
\begin{equation}\label{eq:visom}
w_1^{k_1}w_2^{k_2}\ldots =w_1^{k_1}(\prod_{i=1}^{\infty}w_i^{k_{i+1}},\ldots,\prod_{i=1}^{\infty}w_i^{k_{i+1}})\mapsto \prod_{i=1}^{\infty}w_i^{k_{i+1}} \end{equation}
since $w_i=(w_{i-1},\ldots,w_{i-1})$ for $i\geq 2$.

We can see these maps in the following diagram. 

 \begin{equation}\label{eq:exact2}
   \begin{tikzcd}
\Gal(\bar{k}/ k) \arrow{r} \ar[equal]{dd} & G^{\text{arith}}/ G \arrow{r} \arrow{d}
                                       & N(G)/ G \arrow{r}{\simeq} \arrow{d} 
                                       &  \prod_{i}^{\infty} \mathbb{F}_2 \arrow{dd}  \\& G^{\text{arith}} \cap W^d / G \cap W^d \arrow{r} \arrow{d} & N(G)\cap W^d / G \cap W^d   \arrow{d} &                                                                                                                      \\
  \Gal(\bar{k}/ k)   \arrow{r} &   G^{\text{arith}}/G \arrow{r}  &N(G)/ G  \arrow{r}{\simeq} &  \prod_{i}^{\infty} \mathbb{F}_2 \\
     \end{tikzcd}
  \end{equation}
  
We obtain the left part of the diagram from ~\eqref{eq:quo}. We define the vertical map on the right hand side as $(k_1,k_2, \ldots) \to (k_2,k_3,\ldots)$ so that the diagram commutes.

Let  $w=\prod_{i=1}^{\infty}w_i^{k_i} \pmod {G}$ be an element of $G^{\text{arith}} /G$. Since $\Gal(\bar{k}/ k) \to G^{\text{arith}} /G$ is surjective, there is a $\tau \in \Gal(\bar{k}/k)$ that maps to $w$. The image of $\tau$ in the top row is $(k_1,k_2,\ldots)$ and in the bottom row $(k_2,k_3,\ldots)$. Since the diagram commutes, they are equal and $k_1=k_2=\ldots=k_n$ for all $n$ which shows that $G^{\text{arith}}/G$ is a subset of $\{(1,1,1,\ldots), (0,0,0,\ldots)\}$. 

\end{proof}

\begin{remark}
In \cite[Lemma~2.4.3]{BEK}, a product discriminant $D$ is defined. Moreover, it is shown that $G^{\text{arith}}=G^{\text{geom}}$ if and only if $D$ is a square in $\Q(t)$. One can generalize this to $k(t)$ where $k$ is a number field. By Theorem~\ref{thm:main}, we know that $\Gal(\bar{k}/ k) \to G^{\text{arith}} /G$ factors through a quadratic extension $L$ of $k$. Furthermore, in \cite[Proposition~2.4.5]{BEK}, this discriminant $D$ is explicitly calculated. From this we see that the quadratic extension $L$ is given by $k(\sqrt{u})$ where $D=u(1-t)^{2(e_2-1)}t^{2(e_1-1)}$ for a dynamical Belyi map of combinatorial type $(d;e_1,e_2,e_3)$.
\end{remark}

\begin{bibdiv}
\begin{biblist}

\bib{AHMAD2021}{article}{
      author={Ahmad, Faseeh},
      author={Benedetto, Robert~L.},
      author={Cain, Jennifer},
      author={Carroll, Gregory},
      author={Fang, Lily},
       title={The arithmetic basilica: A quadratic pcf arboreal galois group},
        date={2021},
        ISSN={0022-314X},
     journal={Journal of Number Theory},
  url={https://www.sciencedirect.com/science/article/pii/S0022314X21003620},
}

\bib{ABEGKM}{incollection}{
      author={Anderson, Jacqueline},
      author={Bouw, Irene~I.},
      author={Ejder, Ozlem},
      author={Girgin, Neslihan},
      author={Karemaker, Valentijn},
      author={Manes, Michelle},
       title={Dynamical {B}elyi maps},
        date={2018},
   booktitle={Women in numbers {E}urope {II}},
      series={Assoc. Women Math. Ser.},
      volume={11},
   publisher={Springer, Cham},
       pages={57\ndash 82},
         url={https://doi.org/10.1007/978-3-319-74998-3_5},
      review={\MR{3882706}},
}

\bib{BEK}{article}{
      author={Bouw, Irene~I.},
      author={Ejder, \"{O}zlem},
      author={Karemaker, Valentijn},
       title={Dynamical {B}elyi maps and arboreal {G}alois groups},
        date={2021},
        ISSN={0025-2611},
     journal={Manuscripta Math.},
      volume={165},
      number={1-2},
       pages={1\ndash 34},
         url={https://doi.org/10.1007/s00229-020-01204-3},
      review={\MR{4242559}},
}

\bib{BFHJY}{article}{
      author={Benedetto, Robert~L.},
      author={Faber, Xander},
      author={Hutz, Benjamin},
      author={Juul, Jamie},
      author={Yasufuku, Yu},
       title={A large arboreal {G}alois representation for a cubic
  postcritically finite polynomial},
        date={2017},
        ISSN={2363-9555},
     journal={Res. Number Theory},
      volume={3},
       pages={Art. 29, 21},
}

\bib{Jones-Manes}{article}{
      author={Jones, Rafe},
      author={Manes, Michelle},
       title={Galois theory of quadratic rational functions},
        date={2014},
        ISSN={0010-2571},
     journal={Comment. Math. Helv.},
      volume={89},
      number={1},
       pages={173\ndash 213},
         url={https://doi.org/10.4171/CMH/316},
      review={\MR{3177912}},
}

\bib{Jones-survey}{incollection}{
      author={Jones, Rafe},
       title={Galois representations from pre-image trees: an arboreal survey},
        date={2013},
   booktitle={Actes de la {C}onf\'{e}rence ``{T}h\'{e}orie des {N}ombres et
  {A}pplications''},
      series={Publ. Math. Besan\c{c}on Alg\`ebre Th\'{e}orie Nr.},
      volume={2013},
   publisher={Presses Univ. Franche-Comt\'{e}, Besan\c{c}on},
       pages={107\ndash 136},
      review={\MR{3220023}},
}

\bib{liuosserman}{article}{
      author={Liu, Fu},
      author={Osserman, Brian},
       title={The irreducibility of certain pure-cycle {H}urwitz spaces},
        date={2008},
        ISSN={0002-9327},
     journal={Amer. J. Math.},
      volume={130},
      number={6},
       pages={1687\ndash 1708},
}

\bib{Odoni85}{article}{
      author={Odoni, R. W.~K.},
       title={The {G}alois theory of iterates and composites of polynomials},
        date={1985},
        ISSN={0024-6115},
     journal={Proc. London Math. Soc. (3)},
      volume={51},
      number={3},
       pages={385\ndash 414},
         url={https://doi.org/10.1112/plms/s3-51.3.385},
      review={\MR{805714}},
}

\bib{Pinkrational}{misc}{
      author={Pink, Richard},
       title={Profinite iterated monodromy groups arising from quadratic
  morphisms with infinite postcritical orbits},
        date={2013},
}

\bib{Pinkpolyn}{misc}{
      author={Pink, Richard},
       title={Profinite iterated monodromy groups arising from quadratic
  polynomials},
        date={2013},
}

\end{biblist}
\end{bibdiv}


@preamble{"\def\cprime{$'$} "
}

@misc{pinkrational,
      title={Profinite iterated monodromy groups arising from quadratic morphisms with infinite postcritical orbits}, 
      author={Richard Pink},
      year={2013},
      eprint={1309.5804},
      archivePrefix={arXiv},
      primaryClass={math.GR}
}

@misc{pinkpolyn,
      title={Profinite iterated monodromy groups arising from quadratic polynomials}, 
      author={Richard Pink},
      year={2013},
      eprint={1307.5678},
      archivePrefix={arXiv},
      primaryClass={math.GR}
}

@article {Odoni85,
    AUTHOR = {Odoni, R. W. K.},
     TITLE = {The {G}alois theory of iterates and composites of polynomials},
   JOURNAL = {Proc. London Math. Soc. (3)},
  FJOURNAL = {Proceedings of the London Mathematical Society. Third Series},
    VOLUME = {51},
      YEAR = {1985},
    NUMBER = {3},
     PAGES = {385--414},
      ISSN = {0024-6115},
   MRCLASS = {12E10 (11B37 30F10)},
  MRNUMBER = {805714},
MRREVIEWER = {K. Kiyek},
       DOI = {10.1112/plms/s3-51.3.385},
       URL = {https://doi.org/10.1112/plms/s3-51.3.385},
}

@article {BFHJY,
    AUTHOR = {Benedetto, Robert L. and Faber, Xander and Hutz, Benjamin and
              Juul, Jamie and Yasufuku, Yu},
     TITLE = {A large arboreal {G}alois representation for a cubic
              postcritically finite polynomial},
   JOURNAL = {Res. Number Theory},
  FJOURNAL = {Research in Number Theory},
    VOLUME = {3},
      YEAR = {2017},
     PAGES = {Art. 29, 21},
      ISSN = {2363-9555},
}

@article {BEK,
    AUTHOR = {Bouw, Irene I. and Ejder, \"{O}zlem and Karemaker, Valentijn},
     TITLE = {Dynamical {B}elyi maps and arboreal {G}alois groups},
   JOURNAL = {Manuscripta Math.},
  FJOURNAL = {Manuscripta Mathematica},
    VOLUME = {165},
      YEAR = {2021},
    NUMBER = {1-2},
     PAGES = {1--34},
      ISSN = {0025-2611},
   MRCLASS = {11G32 (11R32 12F10 37P05 37P15)},
  MRNUMBER = {4242559},
       DOI = {10.1007/s00229-020-01204-3},
       URL = {https://doi.org/10.1007/s00229-020-01204-3},
}

@incollection {ABEGKM,
    AUTHOR = {Anderson, Jacqueline and Bouw, Irene I. and Ejder, Ozlem and
              Girgin, Neslihan and Karemaker, Valentijn and Manes, Michelle},
     TITLE = {Dynamical {B}elyi maps},
 BOOKTITLE = {Women in numbers {E}urope {II}},
    SERIES = {Assoc. Women Math. Ser.},
    VOLUME = {11},
     PAGES = {57--82},
 PUBLISHER = {Springer, Cham},
      YEAR = {2018},
   MRCLASS = {37P05 (11G32)},
  MRNUMBER = {3882706},
MRREVIEWER = {Alina Ostafe},
       DOI = {10.1007/978-3-319-74998-3\_5},
       URL = {https://doi.org/10.1007/978-3-319-74998-3_5},
}

@article {Jones-Manes,
    AUTHOR = {Jones, Rafe and Manes, Michelle},
     TITLE = {Galois theory of quadratic rational functions},
   JOURNAL = {Comment. Math. Helv.},
  FJOURNAL = {Commentarii Mathematici Helvetici. A Journal of the Swiss
              Mathematical Society},
    VOLUME = {89},
      YEAR = {2014},
    NUMBER = {1},
     PAGES = {173--213},
      ISSN = {0010-2571},
   MRCLASS = {37P15 (11R32)},
  MRNUMBER = {3177912},
MRREVIEWER = {Andreas O. Bender},
       DOI = {10.4171/CMH/316},
       URL = {https://doi.org/10.4171/CMH/316},
}

@article{AHMAD2021,
title = {The arithmetic basilica: A quadratic PCF arboreal Galois group},
journal = {Journal of Number Theory},
year = {2021},
issn = {0022-314X},
doi = {https://doi.org/10.1016/j.jnt.2021.10.004},
url = {https://www.sciencedirect.com/science/article/pii/S0022314X21003620},
author = {Faseeh Ahmad and Robert L. Benedetto and Jennifer Cain and Gregory Carroll and Lily Fang},
keywords = {Arithmetic dynamics, Arboreal Galois representations},
abstract = {The arboreal Galois group of a polynomial f over a field K encodes the action of Galois on the iterated preimages of a root point x0∈K, analogous to the action of Galois on the ℓ-power torsion of an abelian variety. We compute the arboreal Galois group of the postcritically finite polynomial f(z)=z2−1 when the field K and root point x0 satisfy a simple condition. We call the resulting group the arithmetic basilica group because of its relation to the basilica group associated with the complex dynamics of f. For K=Q, our condition holds for infinitely many choices of x0.}
}

@article {liuosserman,
    AUTHOR = {Liu, Fu and Osserman, Brian},
     TITLE = {The irreducibility of certain pure-cycle {H}urwitz spaces},
   JOURNAL = {Amer. J. Math.},
  FJOURNAL = {American Journal of Mathematics},
    VOLUME = {130},
      YEAR = {2008},
    NUMBER = {6},
     PAGES = {1687--1708},
      ISSN = {0002-9327},
}

@incollection {Jones-survey,
    AUTHOR = {Jones, Rafe},
     TITLE = {Galois representations from pre-image trees: an arboreal
              survey},
 BOOKTITLE = {Actes de la {C}onf\'{e}rence ``{T}h\'{e}orie des {N}ombres et
              {A}pplications''},
    SERIES = {Publ. Math. Besan\c{c}on Alg\`ebre Th\'{e}orie Nr.},
    VOLUME = {2013},
     PAGES = {107--136},
 PUBLISHER = {Presses Univ. Franche-Comt\'{e}, Besan\c{c}on},
      YEAR = {2013},
   MRCLASS = {11R32 (11F80 37P15)},
  MRNUMBER = {3220023},
MRREVIEWER = {Thomas Ward},
}
\end{document}